%-----------------------------------------------------------------------
%    Beginning of article.tex
%-----------------------------------------------------------------------
%
%    This is an AMS-LaTeX sample proceedings article file for use with
%    the amsproc document class and author packages based on amsproc.
%
%    Replace amsproc by the document class name for the target series,
%    e.g. pspum-l.
%

 \documentclass{amsproc}
\usepackage{xcolor}
\usepackage{amssymb}
\newtheorem{theorem}{Theorem}[section]
\newtheorem{lemma}[theorem]{Lemma}
\newtheorem{proposition}[theorem]{Proposition}

\usepackage{geometry}
 \usepackage{enumitem}
\theoremstyle{definition}
\newtheorem{definition}[theorem]{Definition}
\newtheorem{example}[theorem]{Example}

\theoremstyle{remark}
\newtheorem{remark}[theorem]{Remark}
\newtheorem{note}[theorem]{Note}

\numberwithin{equation}{section}
\usepackage{epsfig} 
\usepackage{epstopdf} 
%    Absolute value notation

%    Blank box placeholder for figures (to avoid requiring any
%    particular graphics capabilities for printing this document).

\begin{document}

\title[Some remarks on the exponential separation and dimensions]{Some remarks on the exponential separation and dimension preserving approximation for sets and measures}

%    Information for first author

%    \thanks will become a 1st page footnote.

%    Information for second author
\author{Saurabh Verma}
\address{Department of Applied Sciences, IIIT Allahabad, Prayagraj, India 211015}
\email{saurabhverma@iiita.ac.in}
% \thanks{The first author is supported by the SEED grant project of IIIT Allahabad, India.}
\author{Ekta Agrawal}
%    Address of record for the research reported here
\address{Department of Applied Sciences, IIIT Allahabad, Prayagraj, India 211015}
%    Current address
%
\email{ekta.agrawal5346@gmail.com}
\author{Megala M}
%record for the research reported here
\address{School of Sciences, NIT Andhra Pradesh, Tadepalligudem, India 534101}
%    Current address
%
\email{megala8niya@gmail.com}
%    General info
%    General info
\subjclass[2020]{Primary 28A80; Secondary 28A78}
% \date{January 1, 1994 and, in revised form, June 22, 1994.}

% \dedicatory{This paper is dedicated to our advisors.}

\keywords{Separation condition, self-similar sets, invariant measures, Assouad dimension, $L^q$ dimension, Hausdorff dimension}

\begin{abstract}
In the dimension theory of sets and measures, a recent breakthrough happened due to Hochman, who introduced the exponential separation condition (ESC) and proved the Hausdorff dimension result for invariant sets and measures generated by similarities on the real line. Following this groundbreaking work, we make a modest contribution by weakening the condition. Further, we define the modified ESC using the convex hull of the attractor and show that for homogeneous self-similar IFS on $\mathbb{R},$ both definitions coincide. We also define some sets in the class of all nonempty compact sets using the Assouad and Hausdorff dimensions and subsets of measures in the space of Borel probability measures on $\mathbb{R}^m$ using the $L^q$ dimension and the Rajchman property, and prove their density in the respective spaces.
\end{abstract}

\maketitle

\section{Introduction}
 The dimension estimation of a fractal set and associated measure constructed using an iterated function system (IFS) (see \cite{H}) has attracted a lot of researchers over the last century. Depending on the properties of the contractive transformations, such as similarity (also called similitude), affine, or conformal in $\mathbb{R}^m,$ the fractal set and the measure are characterized as self-similar, self-affine, and self-conformal, respectively. Several dimensions are introduced in the literature, as an example Hausdorff dimension $(\dim_{\mathcal{H}}) $ \cite{Fal, Mat3}, Assouad dimensions $(\dim_{\mathcal{A}})$ \cite{Fraser}, $L^q$ dimension $(\dim_{L^q})$ \cite{O1, Sh1} for the invariant set and measure. Initially, we mention recent progress being made in dimension estimation. Let $A$ and $\mu$ be the fractal set and measure associated to some IFS on $\mathbb{R}^m$. The trivial bound is obtained as 
 \begin{align*}
     &\dim_{\mathcal{H}}A\le\min\{m, \dim_SA\},\\
     &\dim_{\mathcal{H}}\mu\le\min\{m, \dim_S\mu\},
 \end{align*}
 where $\dim_SA$ and $\dim_S\mu$ are the similarity dimensions of $A$ and $\mu,$ respectively. In general, calculating dimensions is a tedious task. In view of this, several separation conditions are defined in an IFS, such as strong separation condition (SSC), open set condition (OSC), strong open set condition (SOSC), and exponential separation condition (ESC), for details \cite{Fraser,MHOCHMAN,Schief}. In general, SSC$\implies$SOSC$\implies$OSC$\implies$ESC, concluding that ESC is the weakest. But the converse need not hold in general. The work conducted so far in the context of dimensional analysis of sets and measures under the separation condition is listed as follows:
 \begin{itemize}
     \item Under OSC, $\dim_{\mathcal{H}}A=\min\{m, \dim_SA\},$ see \cite{H,M}.
     \item IFS with bi-Lipschitz mappings $(s_i\|x-y\|_2\le\|f_i(x)-f_i(y)\|_2\le t_i\|x-y\|_2)$ on $\mathbb{R}^m$ and SSC, we have $s\le\dim_{\mathcal{H}}A\le t,$ where $s$ and $t$ satisfies $\sum_{i=1}^Ns_i^s=1$ and $\sum_{i=1}^Nt_i^t=1,$ respectively, see \cite[Proposition 9.6-9.7]{Fal}. It is noted that the upper bound holds without any separation condition, but the lower bound does not hold even if we assume OSC. Additionally, it does not serve as a bound for the Assouad dimension. For example, visit \cite[Section 8.5]{Fraser}. 
     \item Total overlap implies $\dim_{\mathcal{H}}A<\min\{m, \dim_SA\}.$
     \item Generally, $\dim_{\mathcal{H}}A
     \le\dim_{\mathcal{A}}A,$ but if $A$ is Ahlfors regular, then $\dim_{\mathcal{H}}A=\dim_{\mathcal{A}}A,$ see \cite[Theorem 6.4.1]{Fraser}. IFS with similitudes on $\mathbb{R},$ either $\dim_{\mathcal{H}}A=\dim_{\mathcal{A}}A$ or $\dim_{\mathcal{A}}A=1.$
     \item In $\mathbb{R}$ with similitudes, we have ESC$\implies\dim_{\mathcal{H}}A=\min\{1, \dim_SA\},$ or we can say that dimension drop $\implies$ super-exponential condensation, see \cite{MHOCHMAN} (Similar result in $\mathbb{R}^m$ is shown in \cite{MHOCHMAN1}). Rapaport \cite{AR1} extends the result of Hochman \cite{MHOCHMAN} to algebraic contraction ratios and arbitrary translations. We refer the reader to a few more articles in this direction \cite{AR2, PPV1}.
     \item In $\mathbb{R}$ with similarities and ESC, Shmerkin computed $\dim_{L^q}\mu=\min\{1, \tilde\tau(q)/(q-1)\},$ see \cite[Theorem 6.6]{Sh1}.
     \item IFS with similarity and OSC implies $\dim_{\mathcal{H}}\mu=\min\{m, \dim_S\mu\}$, see \cite{DGSH}. In this case, under SSC, Fraser and Howroyd computed $\dim_{\mathcal{A}}\mu,$ see \cite[Theorem 7.4.1]{Fraser}. 
 \end{itemize}
 It is well known that the Hausdorff dimension decreases under Lipschitz mapping (\cite[Corollary 2.4]{Fal}), but this is not the case with the Assouad dimension (\cite[Corollary 7.2.2]{Fraser}). Bedford \cite{Bed} in his PhD thesis, studied the box dimension of self-similar sets. Recent work in the dimension estimation of self-similar sets can be explored in \cite{VP1}. Initially, a special class of self-affine sets known as self-affine carpets, such as Bedford-McMullen carpets, is studied; for details, see \cite{Fraser1}. Several recent works in the direction of estimation of the Hausdorff dimension of self-affine sets and measures can be found in \cite{BB,Fal1,Hochman2,IDM,BS1}. \\
 In geometric measure theory, the Fourier transform has emerged as a powerful technique for determining the Hausdorff dimension. The interplay between dimension theory and Fourier analysis was first observed by Kaufman \cite{Kaufman}, who proved one part of Marstrand's projection theorem using the Fourier transform method. Furthermore, this technique allows researchers to introduce the Fourier dimension $(\dim_{\mathcal{F}})$ of any subset of $\mathbb{R}^m.$ For any Borel subset $A$ of $\mathbb{R}^m,$ we have $\dim_{\mathcal{F}}(A)\le \dim_{\mathcal{H}}(A)$, and $\dim_{\mathcal{F}}(A)= \dim_{\mathcal{H}}(A)$ iff $A$ is Salem set. Interested readers may refer to the book \cite{Mat3}, and the references therein for details. Recent work on the Fourier decay of self-similar measures and homogeneous self-affine measures can be referred to \cite{BS,BS12}.
   
 The primary theorems in this article include the following.
 \begin{itemize}
     \item The weaker version of the Hochman result \cite[Theorem 1.4]{MHOCHMAN1} is shown in Theorem \ref{th310}.
     \item The modified ESC is being defined using the Hausdorff metric and compared with the existing ESC, along with some interesting remarks. 
     \item Several dense subsets are explored that preserve the Assouad dimension and $L^q$ dimension.
 \end{itemize}
Finally, the identified research gap and potential future comments are outlined in the last Section \ref{Sec4}. % The following work is  Let $\mathcal{I}=\{\mathcal{R}^m: \}$

\section{Preliminaries} \label{Sec2}
Let $\chi(\mathbb{R}^m)$ denote the collection of all nonempty compact subsets of $\mathbb{R}^m$ in the Euclidean norm $\|.\|_2.$ The Hausdorff distance $\mathfrak{H}$ between any $A,B\in\chi(\mathbb{R}^m)$ is elaborated as 
% \begin{equation*}
%     \mathfrak{H}(A,B):=\max\big\{\max_{a\in A}D(a,B),\max_{b\in B}D(b,A)\big\},
% \end{equation*}
% where $D(a,B)=\min_{b\in B}|a-b|.$ It is also defined as
$$\mathfrak{H}(A,B):=\inf\{\epsilon>0: A\subset B_\epsilon \text{ and } B\subset A_\epsilon\},$$ where $A_\epsilon$ denotes the $\epsilon$-neighbourhood of $A.$ It is also redefined as 
$$\mathfrak{H}(A,B):=\max\Big\{\sup_{a\in A}\inf_{b\in B}\|a-b\|_2, \sup_{b\in B}\inf_{a\in A}\|a-b\|_2\Big\}.$$
\begin{definition}\cite{Mat3}
The Fourier transform of any Borel probability measure $\eta$ in $\mathbb{R}^m$ is defined as 
\[
\hat{\eta}(\zeta)= \int e^{-2\pi i \zeta .y} d\eta(y),~~~~ \zeta \in \mathbb{R}^m.
\]
\end{definition}
Recall that the Fourier transform of convolution $\eta*\nu$ has the property  $\widehat{\eta*\nu}(\zeta)=\hat{\eta}(\zeta)\hat{\nu}(\zeta),$ for details; see \cite{Mat3}. It is said that $\eta$ is a Rajchman measure if $\lim_{|\zeta| \to \infty}|\hat{\eta}(\zeta)|= 0.$ 
 
\begin{definition}\cite{Fal}
Let $E$ be a subset of $\mathbb{R}^m$ and $r$ be any non-negative number. Then, the $r$-dimensional Hausdorff measure of $E$ is defined as $$\mathcal{H}^r(E)=\lim_{\delta\to 0^+}\mathcal{H}_{\delta}^r(E),$$
where $\mathcal{H}_{\delta}^r(E)=\inf\{\sum_{i=1}^{\infty}(diam(U_i))^r:~ \{U_i\} \text{ is a } \delta\text{-cover of } E \}$ and $diam(U_i)=\sup_{x,y\in U_i}\|x-y\|_2.$ The Hausdorff dimension of $E$ is defined as $$\dim_{\mathcal{H}}E=\inf\{r\geq0:~\mathcal{H}^r(E)=0\}.$$
\end{definition}
The approximation of a compact set that preserves dimension is explored in \cite{VM1}.
\begin{definition}\cite{Fraser}
Let $E$ be a non-empty subset of $\mathbb{R}^m.$ The Assouad dimension of $E$ is defined as
\begin{align*}
   \dim_{\mathcal{A}}E=\inf\Bigg\{\beta\ge0: \exists \text{ a constant }&C>0\text{ such that }\forall~ 0<r<R\text{ and }x\in E,\\&N_r(B(x,R)\cap E)\le C\Big(\frac{R}{r}\Big)^\beta\Bigg\}, 
\end{align*}
where $B(x,R)=\{y\in\mathbb{R}^m:\|y-x\|_2\le R\}$ and $N_r(B(x,R)\cap E)$ is the smallest number of open set required for a $r$-cover of $B(x,R)\cap E.$    
\end{definition}
\begin{example}\label{eg1}
The Assouad dimension of any finite subset of $\mathbb{R}^m$ is zero.    
\end{example}
Chen \cite{Chen} et al. explored the accessible values for the Assouad dimension of subsets. We now mention properties of the Assouad dimension in the following theorem, used in the article.
\begin{theorem}\cite[Lemma 2.4.1, Lemma 2.4.2]{Fraser}\label{th1}
\begin{enumerate}
    \item The Assouad dimension is monotone, finitely stable, stable under closure, and satisfies the open set property. 
    \item Let $E$ be a nonempty subset of $\mathbb{R}^m,$ and $T:\mathbb{R}^m\rightarrow\mathbb{R}^m$ be a bi-Lipschitz map. Then, $\dim_\mathcal{A}T(E)=\dim_\mathcal{A}E.$
\end{enumerate}   
\end{theorem}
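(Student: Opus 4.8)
The plan is to prove every item directly from the covering‑number definition of $\dim_{\mathcal A}$, exploiting that the admissibility condition $N_r(B(x,R)\cap E)\le C(R/r)^\beta$ is homogeneous in the ratio $R/r$, so that all the "re‑centring at cost of a constant factor" tricks cost only a change in the constant $C$ and never in the exponent $\beta$. \textbf{Monotonicity} is immediate: if $E\subseteq F$ and $x\in E$, then $B(x,R)\cap E\subseteq B(x,R)\cap F$, hence any pair $(\beta,C)$ admissible for $F$ is admissible for $E$, and taking infima gives $\dim_{\mathcal A}E\le\dim_{\mathcal A}F$. For \textbf{finite stability} the inequality $\dim_{\mathcal A}(E\cup F)\ge\max\{\dim_{\mathcal A}E,\dim_{\mathcal A}F\}$ is monotonicity; for the reverse, fix $\beta>\max\{\dim_{\mathcal A}E,\dim_{\mathcal A}F\}$ with constants $C_E,C_F$, use the subadditivity $N_r(B(x,R)\cap(E\cup F))\le N_r(B(x,R)\cap E)+N_r(B(x,R)\cap F)$, and, when the centre $x$ lies in only one of the two sets, re‑centre the offending ball at a point of that set (which lies within $R$ of $x$) at the cost of replacing $R$ by $2R$; this yields a bound of the form $(C_E+2^{\beta}C_F)(R/r)^{\beta}$, so $\dim_{\mathcal A}(E\cup F)\le\beta$ for every such $\beta$.

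\textbf{Stability under closure} is the step I expect to require the most care. Again $\dim_{\mathcal A}\overline E\ge\dim_{\mathcal A}E$ is monotonicity. For the reverse inequality the key technical fact is that the $r$‑covering number of a set and of its closure agree up to a harmless rescaling: an $r$‑cover of $A$ becomes a $2r$‑cover of $\overline A$ after enlarging each covering set, while trivially $N_r(A)\le N_r(\overline A)$. Combining this with a re‑centring argument — a point of $\overline E$ lying in $B(x,R)$ is within $r$ of a point of $E$, at which the hypothesis on $E$ may be applied — produces $N_r(B(x,R)\cap\overline E)\le C'(R/r)^{\beta}$, leaving the exponent untouched. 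The only real nuisance here is keeping the open‑ball / closed‑ball and the diameter‑versus‑radius conventions in the definition of $N_r$ mutually consistent, which again affects only constants. For the \textbf{open set property}, in $\mathbb R^m$ one has $N_r(B(x,R))\asymp_m (R/r)^m$ by a standard volume/packing comparison, whence $\dim_{\mathcal A}\mathbb R^m=m$; monotonicity then gives $\dim_{\mathcal A}E\le m$ for every $E\subseteq\mathbb R^m$ and $\dim_{\mathcal A}E\ge m$ as soon as $E$ contains a nonempty open set, so $\dim_{\mathcal A}E=m$ whenever $E$ has nonempty interior.

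\textbf{Bi‑Lipschitz invariance.} Write $c_1\|x-y\|_2\le\|T(x)-T(y)\|_2\le c_2\|x-y\|_2$ with $0<c_1\le c_2$. For $y=T(x)\in T(E)$ and $0<r<R$, note $B(y,R)\cap T(E)\subseteq T\big(B(x,R/c_1)\cap E\big)$ and that $T$ maps a set of radius $\rho$ into one of radius at most $c_2\rho$. Pushing an $(r/c_2)$‑cover of $B(x,R/c_1)\cap E$ forward through $T$ therefore yields an $r$‑cover of $B(y,R)\cap T(E)$ of cardinality at most $C(c_2/c_1)^{\beta}(R/r)^{\beta}$, so $\dim_{\mathcal A}T(E)\le\dim_{\mathcal A}E$; applying the same argument to the bi‑Lipschitz map $T^{-1}$ gives the opposite inequality and hence equality. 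Overall the substantive obstacle is the closure statement — specifically the covering‑number bookkeeping together with the re‑centring device that lets one always invoke the defining estimate at a centre belonging to the set in question; everything else is a routine bookkeeping of constants in a homogeneous inequality.
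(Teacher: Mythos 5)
Your proof is correct: all the assertions follow, as you show, from the homogeneity of the defining estimate in $R/r$ together with re-centring, enlargement, and push-forward arguments that only change the constant $C$ and never the exponent $\beta$ (and the bi-Lipschitz step via $T^{-1}$ is handled properly). The paper itself gives no proof of this statement --- it is quoted with a citation to Fraser's book (Lemmas 2.4.1--2.4.2) --- and your covering-number argument is essentially the standard one found there, so there is nothing substantive to contrast.
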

Let $\mathcal{P}(\mathbb{R}^m)$ be the set of Borel probability measures on $\mathbb{R}^m$ with the metric defined as
$$d_L(\mu,\nu)=\sup_{f\in \text{Lip}_1}\Big\{\Big|\int_{\mathbb{R}^m}fd\mu-\int_{\mathbb{R}^m}fd\nu\Big|\Big\},$$
where $\text{Lip}_1$ is a set of all real-valued Lipschitz functions on $\mathbb{R}^m$ with the Lipschitz constant less than or equal to $1$, see \cite{Bill,Parth}. Let $\mu,\nu\in\mathcal{P}(\mathbb{R}^m).$ Then, the convolution $\mu*\nu$ is given as 
    $$\mu*\nu(A)=\int_{\mathbb{R}^m}\mu(A-x) d\nu(x),\hspace{.4cm} \mbox{for all Borel set } A \in \mathbb{R}^m.$$
      Equivalently, $\mu*\nu$ is the unique Borel probability measure satisfying 
      \begin{equation*}
\int_{\mathbb{R}^m}f(x)d(\mu*\nu)dx=\int_{\mathbb{R}^m}\int_{\mathbb{R}^m}f(x+y)d\mu(x)d\nu(y),~ \forall ~ f\in C_b(\mathbb{R}^m),\end{equation*}
 where $C_b(\mathbb{R}^m)$ denotes the set of all continuous and bounded functions on $\mathbb{R}^m$. 
%  If $\mu=\sum_{j=1}^nc_j\delta_{x_j}$ by above equation,
% \begin{eqnarray}\label{convolusion}
% \int_{\mathbb{R}^m}f(x)\mu*\nu(A)&=&\int_{\mathbb{R}^m}\int_{\mathbb{R}^m}f(x+y)d(\sum_{j=1}^nc_j\delta_{x_j})d\nu(y)\nonumber  \\
% &=& \int_{\mathbb{R}^m}\sum_{j=1}^nc_jf(x_i+y)d\nu(y)
% \end{eqnarray}\\
We know that (i) $\mu*\nu(A)=\nu*\mu(A)$ and (ii)  for any $x_0\in\mathbb{R}^m,$ we have  $\mu*\delta_{x_0}(A)=\mu(A-x_0).$ Feng et al. \cite{Feng} studied the convolutions of equicontractive self-similar measures on $\mathbb{R}.$
% $$\mu*\nu(A)=\int_{\mathbb{R}^m}\int_{\mathbb{R}^m}1_A(x+y)d\mu(x)d\nu(y),$$ where $A$ is a measurable subset of $\mathbb{R}^m.$
% \item Let $x_0\in\mathbb{R}^m.$ Then $\mu*\delta_{x_0}(A)=\mu(A-x_0),$ where 
% %%%%%%%
\par
We now define $L^q$ dimension of a Borel probability measure with bounded support. This concept was initially introduced by R\'enyi \cite{Re}, and is one of the fundamental ingredients in the study of multifractal formalism; see \cite{JS}.
\begin{definition}\cite{Sh1} 
Let $q\in(1,\infty).$ If $\mu$ be a Borel probability measure on $\mathbb{R}$ with bounded support, then the $L^q$ spectrum of $\mu$ is defined as 
$$\tau(\mu,q)=\liminf_{k\to\infty}\frac{-\log\sum_{I\in D_k}\mu(I)^q}{k},$$ where $D_k=\{[j2^{-k}, (j+1)2^{-k}):j\in\mathbb{Z}\}$ is the level-$k$ dyadic partitions of $\mathbb{R}.$ The $L^q$ dimension of $\mu$ is given as $\dim_{L^q}(\mu)=\frac{\tau(\mu,q)}{q-1}.$\\
If $\mu$ is a Borel probability measure on $\mathbb{R}^m$ with bounded support, then the $L^q$ spectrum of $\mu$ is defined as 
$$\tau(\mu,q)=\liminf_{k\to\infty}\frac{-\log\sum_{I^1\times I^2\times\cdots \times I^m\in D_{k}^m}\mu(I^1\times I^2\times\cdots \times I^m)^q}{k},$$ where $D_k^m=\{[j2^{-k}, (j+1)2^{-k}):j\in\mathbb{Z}\}^m$ is the dyadic partitions of $\mathbb{R}^m.$ The $L^q$ dimension of $\mu$ is given as $\dim_{L^q}(\mu)=\frac{\tau(\mu,q)}{q-1}.$
\end{definition}
For entropy and $L^q$ dimension, researchers consider preferring logarithms on base $2,$ for normalizing the value of dimension. As $q\rightarrow1^+,$ the $L^q$ dimension tends to the Hausdorff dimension of the measure. Shmerkin \cite{Sh1} computed $L^q$ dimension of self-similar measure under ESC. Peres and Solomyak \cite{Peres} computed the $L^q$ dimension of self-conformal measures, without any separation assumptions. Several recent works on $L^q$ dimension can be studied in \cite{FB,O1,O2,O3}.
\begin{example}\label{eg2}
We now proceed to estimate the $L^q$ dimension of some well-known Borel probability measures,
to enhance clarity for new readers.
\begin{itemize}
    \item Let $x_0\in \mathbb{R}$ and $\delta_{x_0}$ be the Dirac measure supported on $x_0.$ Then,
        $$\tau(\delta_{x_0},q)=\liminf_{k\rightarrow\infty}\frac{-\log\sum_{I\in D_k}(\delta_{x_0}(I))^q}{k}=\liminf_{k\rightarrow\infty}\frac{-\log(1)^q}{k}=0.$$
        This gives $\dim_{L^q}(\delta_{x_0})=0.$
        % \item Let $c\in\mathbb{R}.$ Then,
        % $$\tau(c\delta_{x_0},q)=\liminf_{k\rightarrow\infty}\frac{-\log(c)^q}{k}=0, \text{  and  } \dim_{L^q}(c\delta_{x_0})=0.$$
        \item Let $x_1,x_2,\ldots,x_n$ be distinct points in $\mathbb{R}$ and $c_1,c_2,\ldots , c_n>0.$ Then,
        $$\tau(c_1\delta_{x_1}+c_2\delta_{x_2}+\dots+c_n\delta_{x_n},q)=\liminf_{k\rightarrow\infty}\frac{-\log (c_1^q+c_2^q+\ldots +c_n^q)}{k}=0,$$
        and $\dim_{L^q}(c_1\delta_{x_1}+c_2\delta_{x_2}+\dots+c_n\delta_{x_n})=0.$
        \item Let $\mathcal{L}\big|_{[0,1]}$ be the normalized Lebesgue measure supported on $[0,1].$ Then, 
        \begin{align*}
            \tau(\mathcal{L}\big|_{[0,1]}, q)&=\liminf_{k\rightarrow\infty}\frac{-\log\sum_{I\in D_k}(\mathcal{L}\big|_{[0,1]}(I))^q}{k}\\
            &=\liminf_{k\rightarrow\infty}\frac{-\log(2^k-1)2^{-kq}}{k}\\
            &=\liminf_{k\rightarrow\infty}\frac{-\log(2^k-1)}{k}+\liminf_{k\rightarrow\infty}\frac{-\log2^{-kq}}{k}\\
            &=-\log2+q\log2=(q-1)\log2=q-1.
        \end{align*}
        Thus, $\dim_{L^q}(\mathcal{L}\big|_{[0,1]})=1$.
            \end{itemize}
\end{example}
Let $\Phi$ denote the similarity mapping on $\mathbb{R}^m,$ then 
$\Phi(x)=s\mathcal{O}x+d,$ where $s\in(-1,1)$ is the scaling ratio, $\mathcal{O}$ is the $m\times m$ orthogonal matrix, and $d\in \mathbb{R}^m$ is the translation vector. Hochman \cite{MHOCHMAN1} defined the distance between two similarities $\Phi$ and $\Phi_*$ in $\mathbb{R}^d$ as
\begin{align*}
    d(\Phi, \Phi_*):=|\log s-\log s_*|+\|\mathcal{O}-\mathcal{O}_*\|+\|d-d_*\|_2,
\end{align*}
where $\|.\|_2$ denotes the Euclidean norm in $\mathbb{R}^m$ and $\|.\|$ is the operator norm. Consider an IFS that consists of a finite number of similarity mappings on $\mathbb{R}^n$, i.e., $\mathcal{I}=\{\mathbb{R}^m; h_1,h_2,\ldots, h_N\}$ and 
\begin{equation}\label{delta1}
    \Delta_n=\inf\{d(h_i,h_j): i,j\in \{1,2,\ldots, N\}^n, i\ne j\}.
\end{equation}For $i=i_1i_2\ldots i_n\in \{1,2,\ldots N\}^n,$ we denote $h_i=h_{i_1}\circ h_{i_2}\circ\cdots \circ h_{i_n}.$ An IFS $\mathcal{I}$ is said to satisfy the exponential separation condition (ESC) if there exists a constant $\delta>0$ such that $\Delta_n>\delta^n$ for infinitely many $n\in \mathbb{N},$ where as $\mathcal{I}$ is said to satisfy the strong ESC if there exists a constant $\delta>0$ such that $\Delta_n>\delta^n,$ for all $n\in \mathbb{N},$ see \cite{BB1,MHOCHMAN1,Hochman2,Sh1}.\par
An example was presented by Garsia \cite{Garsia} as $\{\mathbb{R}:~h_1(x)=\gamma^{-1} x+1, h_2(x)=\gamma^{-1} x-1\},$ where $\gamma$ is the real root of the equation $x^3-x^2-2=0.$ Hochman \cite{MHOCHMAN1} shows that this IFS satisfies ESC, but not OSC. Meanwhile, super-exponential condensation means $\lim_{n\rightarrow\infty}\frac{1}{n}\log\Delta_n=-\infty.$ Bárány and Käenmäki \cite[Theorem 2.1]{BB1} showed the existence of an uncountable family of homogeneous self-similar sets of $\mathbb{R}$ that do not have total overlaps and satisfy super-exponential condensation. Baker \cite{Baker1} also independently showed the existence of such self-similar sets in $\mathbb{R}$ using the theory of continued fractions. Using the technique of Baker, Chen \cite{Chen1} provided some more examples in $\mathbb{R}.$ 

% In Theorem 1.5 of \cite{MHOCHMAN} shows that for an IFS on $\mathbb{R}$ with algebraic parameter with exact overlap have dimension drop, that is, it satisfies   

\section{Main Results}
\begin{lemma}\label{lm1}
    Let $A$ and $B$ be the matrices of order $k\times k$ and $m\times m,$ respectively. Then, 
    $$\Big\|\begin{bmatrix}
A & \textbf{0}_1 \\
\textbf{0}_2 & B
\end{bmatrix}\Big\|\ge\max\{\|A\|,\|B\|\},$$
where $\textbf{0}_1$ and $\textbf{0}_2$ are zero matrices of order $k\times m$ and $m\times k,$ respectively. Further, if $A$ and $B$ are orthogonal, so is $\begin{bmatrix}
A & \textbf{0}_1 \\
\textbf{0}_2 & B
\end{bmatrix}.$
\end{lemma}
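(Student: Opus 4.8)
The plan is to work directly with the operator norm defined as $\|M\| = \sup_{\|v\|_2 = 1} \|Mv\|_2$. Write $M = \begin{bmatrix} A & \textbf{0}_1 \\ \textbf{0}_2 & B \end{bmatrix}$, which acts on $\mathbb{R}^{k+m} = \mathbb{R}^k \times \mathbb{R}^m$.

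First I would prove the norm inequality. Given any unit vector $u \in \mathbb{R}^k$, form the vector $v = (u, \textbf{0}) \in \mathbb{R}^{k+m}$, which is also a unit vector. A direct block computation gives $Mv = (Au, \textbf{0})$, so $\|Mv\|_2 = \|Au\|_2$. Hence $\|M\| \ge \|Mv\|_2 = \|Au\|_2$ for every unit $u$, and taking the supremum over such $u$ yields $\|M\| \ge \|A\|$. The identical argument with $v = (\textbf{0}, w)$ for unit $w \in \mathbb{R}^m$ gives $\|M\| \ge \|B\|$. Combining the two gives $\|M\| \ge \max\{\|A\|, \|B\|\}$.

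Next I would handle the orthogonality claim. Assume $A$ and $B$ are orthogonal, i.e. $A^\top A = I_k$ and $B^\top B = I_m$. Compute the transpose blockwise: $M^\top = \begin{bmatrix} A^\top & \textbf{0}_2^\top \\ \textbf{0}_1^\top & B^\top \end{bmatrix} = \begin{bmatrix} A^\top & \textbf{0}_1 \\ \textbf{0}_2 & B^\top \end{bmatrix}$ (noting $\textbf{0}_2^\top$ is $k \times m$ and $\textbf{0}_1^\top$ is $m \times k$). Multiplying out,
\[
M^\top M = \begin{bmatrix} A^\top A & \textbf{0}_1 \\ \textbf{0}_2 & B^\top B \end{bmatrix} = \begin{bmatrix} I_k & \textbf{0}_1 \\ \textbf{0}_2 & I_m \end{bmatrix} = I_{k+m},
\]
so $M$ is orthogonal.

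There is essentially no obstacle here; the statement is elementary and follows from the block structure. The only point requiring any care is bookkeeping with the dimensions of the various zero blocks (so that the transposes and products are conformable), and being explicit that the embeddings $u \mapsto (u, \textbf{0})$ and $w \mapsto (\textbf{0}, w)$ are isometric so that unit vectors map to unit vectors. One could alternatively observe that the eigenvalues of $M^\top M$ are the union of those of $A^\top A$ and $B^\top B$, giving $\|M\| = \max\{\|A\|,\|B\|\}$ with equality, but only the stated inequality is needed in the sequel.
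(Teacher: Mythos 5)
Your proposal is correct and follows essentially the same route as the paper: restrict $M$ to unit vectors of the form $(u,\textbf{0})$ and $(\textbf{0},w)$ and take suprema to get $\|M\|\ge\|A\|$ and $\|M\|\ge\|B\|$. You additionally spell out the block computation $M^{\top}M=I_{k+m}$ for the orthogonality claim, which the paper leaves implicit, but this is a minor and welcome addition rather than a different approach.
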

\begin{proof}
Let $x=(x_1,x_2,\ldots,x_k)\in \mathbb{R}^k$ and $y=(y_1,y_2,\ldots,y_m)\in\mathbb{R}^m$ such that $(x,y)\in\mathbb{R}^{k+m}$ and $\|(x,y)\|_2\le1.$
Then,
% \begin{align*}
% \Big\| \begin{bmatrix}
% A & \textbf{0}_1 \\
% \textbf{0}_2 & B
% \end{bmatrix}\begin{pmatrix}
% x \\
% y
% \end{pmatrix}\Big\|_2 = \Big\| \begin{pmatrix}
% Ax \\
% By
% \end{pmatrix}\Big\|_2=\sqrt{\|Ax\|_2^2+\|By\|_2^2}
% \end{align*}
\begin{align*}
   \Big\|\begin{bmatrix}
A & \textbf{0}_1 \\
\textbf{0}_2 & B
\end{bmatrix}\Big\|\ge  \Big\|\begin{bmatrix}
A & \textbf{0}_1 \\
\textbf{0}_2 & B
\end{bmatrix}\begin{pmatrix}
x \\
y
\end{pmatrix}\Big\|_2 =\Big\| \begin{pmatrix}
Ax \\
By
\end{pmatrix}\Big\|_2=\sqrt{\|Ax\|_2^2+\|By\|_2^2}.
\end{align*}
The previous inequality is also true, if we choose $(x,y)=(\textbf{0},y)\in\mathbb{R}^{k+m}$ with $\|(\textbf{0},y)\|_2\le1.$ So, 
\begin{align*}
   \Big\|\begin{bmatrix}
A & \textbf{0}_1 \\
\textbf{0}_2 & B
\end{bmatrix}\Big\|\ge \|By\|_2.
\end{align*}
 Taking the supremum for all $y\in \mathbb{R}^m$ with $\|y\|_2\le1,$ gives $\Big\|\begin{bmatrix}
A & \textbf{0}_1 \\
\textbf{0}_2 & B
\end{bmatrix}\Big\|\ge \|B\|.$ Similarly, by choosing $(x,\textbf{0})\in\mathbb{R}^{k+m}$ with $\|(x,\textbf{0})\|_2\le 1$ gives $\Big\|\begin{bmatrix}
A & \textbf{0}_1 \\
\textbf{0}_2 & B
\end{bmatrix}\Big\|\ge \|A\|,$ concluding the assertion.
\end{proof}

Consider the IFS $\mathcal{I}_1=\{\mathbb{R};h_1,h_2,\ldots,h_N\}$ and $\mathcal{I}_2=\{\mathbb{R};g_1,g_2,\ldots,g_N\}$ consists of similarity mappings on $\mathbb{R},$ i.e., $h_i(x)=c_ix+d_i$ and $g_i(x)=p_ix+q_i,$ where $c_i,p_i$ are similarity ratios in $(-1,1)$ and $d_i,q_i\in\mathbb{R}$ are translation vectors. The product of IFS $\mathcal{I}_1$ and $\mathcal{I}_2$ is given as $\mathcal{I}_1\times \mathcal{I}_2$ and is defined as 
$$\mathcal{I}_1\times \mathcal{I}_2:=\{\mathbb{R}^2;\ \psi_{ij}: i,j=1,2,\ldots, N\},$$
where $\psi_{ij}(x,y):=(h_i(x),g_j(y)).$ Clearly, for $\psi_{ij}$ to be simlitudes, $c_i=d_j=c,\ \forall \ i,j. $
Consider an example $\mathcal{I}_1=\Big\{[0,1]: h_1(x)=\frac{x}{3}, h_2(x)=\frac{x}{3}+\frac{1}{3}\Big\}$ and $\mathcal{I}_2=\Big\{[0,1]: g_1(x)=\frac{x}{2}, g_2(x)=\frac{x}{2}+\frac{1}{2}\Big\}.$ Then, $\psi_{11}(x,y)=\Big(\frac{x}{3},\frac{y}{2}\Big).$ Then, 
\begin{align*}
    \|\psi_{11}(x,y)-\psi_{11}(x_*,y_*)\|_2=\Big\|\Big(\frac{x}{3},\frac{y}{2}\Big)-\Big(\frac{x_*}{3},\frac{y_*}{2}\Big)\Big\|_2\le \frac{1}{2}\|(x,y)-(x_*,y_*)\|_2.
\end{align*}
Clearly, $\psi_{11}$ is a contraction on $\mathbb{R}^2,$ but not a similitude.

\begin{theorem}
    Let $\mathcal{I}_1=\{\mathbb{R}^k;h_1,h_2,\ldots,h_N\}$ and $\mathcal{I}_2=\{\mathbb{R}^m;g_1,g_2,\ldots,g_N\}$ be two IFSs such that $h_i(x)=cU_ix+a_i$ and $g_i(x)=cV_ix+b_i,$ where $c\in(-1,1), a_i\in\mathbb{R}^k, b_i\in \mathbb{R}^m, U_i$ and $V_i$ are $k\times k$ and $m\times m$ orthogonal matrices, respectively. If $\mathcal{I}_1$ and $\mathcal{I}_2$ satisfy strong ESC, then the product IFS $\mathcal{I}_1\times \mathcal{I}_2$ given as 
    $$\mathcal{I}_1\times \mathcal{I}_2=\{\mathbb{R}^{k+m};\psi_{ij}: i,j=1,2,\ldots,N\}, \ \psi_{ij}=(h_i,g_j)$$
    satisfies strong ESC.
\end{theorem}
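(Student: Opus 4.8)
The plan is to transfer the exponential separation from the two factors to the product, exploiting the fact that compositions in $\mathcal{I}_1\times\mathcal{I}_2$ split coordinatewise. First I would record the algebraic structure. Each $\psi_{ij}(x,y)=(cU_ix+a_i,\,cV_jy+b_j)$ is a similitude of $\mathbb{R}^{k+m}$ with ratio $c$, orthogonal part $\begin{bmatrix}U_i&\textbf{0}_1\\\textbf{0}_2&V_j\end{bmatrix}$ (orthogonal by Lemma \ref{lm1}) and translation $(a_i,b_j)$. For a word $\mathbf{w}=(i_1,j_1)(i_2,j_2)\cdots(i_n,j_n)$ over the alphabet $\{1,\dots,N\}^2$, a straightforward induction shows $\psi_{\mathbf{w}}:=\psi_{(i_1,j_1)}\circ\cdots\circ\psi_{(i_n,j_n)}$ acts as $(x,y)\mapsto(h_I(x),g_J(y))$, where $I=i_1\cdots i_n$ and $J=j_1\cdots j_n$. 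Writing $h_I(x)=c^nW_Ix+e_I$ and $g_J(y)=c^nZ_Jy+f_J$ with $W_I=U_{i_1}\cdots U_{i_n}$ and $Z_J=V_{j_1}\cdots V_{j_n}$ orthogonal, we see that $\psi_{\mathbf{w}}$ is a similitude of ratio $c^n$, orthogonal part $\begin{bmatrix}W_I&\textbf{0}_1\\\textbf{0}_2&Z_J\end{bmatrix}$, and translation $(e_I,f_J)$.

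Next I would estimate the Hochman distance $d(\psi_{\mathbf{w}},\psi_{\mathbf{w}'})$ for two distinct words $\mathbf{w}\ne\mathbf{w}'$ of length $n$, which correspond to pairs $(I,J)\ne(I',J')$ with $I,I',J,J'\in\{1,\dots,N\}^n$. Since both maps have ratio $c^n$, the logarithmic term vanishes, and therefore
$$d(\psi_{\mathbf{w}},\psi_{\mathbf{w}'})=\Big\|\begin{bmatrix}W_I-W_{I'}&\textbf{0}_1\\\textbf{0}_2&Z_J-Z_{J'}\end{bmatrix}\Big\|+\big\|(e_I-e_{I'},\,f_J-f_{J'})\big\|_2.$$
By Lemma \ref{lm1} the first summand is at least $\max\{\|W_I-W_{I'}\|,\|Z_J-Z_{J'}\|\}$, while the second (being a Euclidean norm of a stacked vector) is at least $\max\{\|e_I-e_{I'}\|_2,\|f_J-f_{J'}\|_2\}$. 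Hence, if $I\ne I'$ then $d(\psi_{\mathbf{w}},\psi_{\mathbf{w}'})\ge\|W_I-W_{I'}\|+\|e_I-e_{I'}\|_2=d(h_I,h_{I'})\ge\Delta_n(\mathcal{I}_1)$, and symmetrically, if $J\ne J'$ then $d(\psi_{\mathbf{w}},\psi_{\mathbf{w}'})\ge d(g_J,g_{J'})\ge\Delta_n(\mathcal{I}_2)$, where $\Delta_n(\mathcal{I}_1)$ and $\Delta_n(\mathcal{I}_2)$ denote the quantity in \eqref{delta1} for the respective factors.

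Finally I would invoke the ESC hypotheses: choose $\delta_1,\delta_2>0$ with $\Delta_n(\mathcal{I}_1)>\delta_1^{\,n}$ and $\Delta_n(\mathcal{I}_2)>\delta_2^{\,n}$ for all $n$, and put $\delta:=\min\{\delta_1,\delta_2\}>0$. Given any two distinct words of length $n$, at least one of $I\ne I'$, $J\ne J'$ must hold, so by the previous step $d(\psi_{\mathbf{w}},\psi_{\mathbf{w}'})>\delta^{\,n}$ (using $\delta\le\delta_i$, hence $\delta^{\,n}\le\delta_i^{\,n}$). Since the alphabet $\{1,\dots,N\}^2$ is finite, the infimum in \eqref{delta1} for $\mathcal{I}_1\times\mathcal{I}_2$ is a minimum, so $\Delta_n(\mathcal{I}_1\times\mathcal{I}_2)>\delta^{\,n}$ for every $n$; that is, $\mathcal{I}_1\times\mathcal{I}_2$ satisfies ESC.

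I do not anticipate a serious obstacle here — the essential point is the coordinatewise splitting of compositions, which is precisely what makes the two separation constants interact. The places that merit a little care are: verifying that the orthogonal parts really are block-diagonal so that Lemma \ref{lm1} applies; maintaining the correct dictionary between product words of length $n$ and pairs of factor words of length $n$ (this uses that both IFSs have the same number $N$ of maps); and upgrading the estimate from ``$\ge$'' to a strict inequality, which follows from finiteness of the alphabet (or, alternatively, from replacing $\delta$ by $\delta/2$).
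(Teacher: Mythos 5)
Your proposal is correct and follows essentially the same route as the paper's proof: identify the orthogonal part of $\psi_{ij}$ as the block-diagonal matrix, apply Lemma \ref{lm1} to bound $d(\psi_{\mathbf{w}},\psi_{\mathbf{w}'})$ below by the factor distances $d(h_I,h_{I'})$, $d(g_J,g_{J'})$, and conclude with $\delta=\min\{\delta_1,\delta_2\}$. If anything, your explicit coordinatewise splitting of level-$n$ compositions and the case analysis ($I\ne I'$ versus $J\ne J'$) is slightly more careful than the paper's intermediate claim $\Delta_n\ge\max\{\Delta_n^{\mathcal{I}_1},\Delta_n^{\mathcal{I}_2}\}$ (which should be a minimum when only one coordinate word differs, though this does not affect the final bound $\Delta_n>\min\{\delta_1^n,\delta_2^n\}$).
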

\begin{proof}
Since IFSs $\mathcal{I}_1$ and $\mathcal{I}_2$ satisfy strong ESC, there exist constants $\delta_1$ and $\delta_2>0,$ respectively such that $$\Delta_n^{\mathcal{I}_1}>\delta_1^n ~~\text{ and }~~\Delta_n^{\mathcal{I}_2}>\delta_2^n,$$
for all $n\in\mathbb{N}.$ Notice that for $x\in \mathbb{R}^k$ and $y\in \mathbb{R}^m,$ one gets
\begin{align*}
    \psi_{ij}(x,y)=(h_i(x),g_j(y))=(cU_ix+a_i,cV_jy+b_j)=c\begin{bmatrix}
U_i & \textbf{0}_1 \\
\textbf{0}_2 & V_j
\end{bmatrix}\begin{pmatrix}
x \\
y 
\end{pmatrix}+\begin{pmatrix}
a_i \\
b_j
\end{pmatrix},
\end{align*}
where $\textbf{0}_1$ and $\textbf{0}_2$ are zero matrices of order $k\times m$ and $m\times k,$ respectively. In view of Lemma \ref{lm1}, one gets 
    \begin{align*}
        d(\psi_{ij},\psi_{i'j'})&=\Big\|\begin{bmatrix}
U_i & \textbf{0}_1 \\
\textbf{0}_2 & V_j
\end{bmatrix}-\begin{bmatrix}
U_{i'} & \textbf{0}_1 \\
\textbf{0}_2 & V_{j'}
\end{bmatrix}\Big\|+\Big\|\begin{pmatrix}
a_i \\
b_j
\end{pmatrix}-\begin{pmatrix}
a_{i'} \\
b_{j'}\end{pmatrix}\Big\|_2\\
&=\Big\|\begin{bmatrix}
U_i-U_{i'}& \textbf{0}_1 \\
\textbf{0}_2 & V_j-V_{j'}
\end{bmatrix}\Big\|+\Big\|\begin{pmatrix}
a_i-a_{i'}\\
b_j-b_{j'}
\end{pmatrix}\Big\|_2\\
&\ge \max\{ \|U_i-U_{i'}\|,\|V_j-V_{j'}\|\}+\sqrt{\|
a_i-a_{i'}\|_2^2+\|
b_j-b_{j'}\|_2^2}\\
&\ge \max\{ \|U_i-U_{i'}\|+\|
a_i-a_{i'}\|_2,\|V_j-V_{j'}\|+\|
b_j-b_{j'}\|_2\}\\
&=\max\{d(h_i,h_{i'}),d(g_j,g_{j'})\}\\
&\ge \max\{\Delta_n^{\mathcal{I}_1},\Delta_n^{\mathcal{I}_2}\}.
    \end{align*}
    Thus, \begin{align*}
    \Delta_n&=\inf\{d(\psi_{ij},\psi_{i'j'}): ij,i'j'\in \{1,2,\ldots,N\}^n, ij\ne i'j'\}\\
    &\ge\max\{\Delta_n^{\mathcal{I}_1},\Delta_n^{\mathcal{I}_2}\}
        >\min\{\delta_1^n,\delta_2^n\}=\delta_3^n, \end{align*}where $\delta_3=\min\{\delta_1,\delta_2\},$ for all $n\in\mathbb{N},$ assuring that $\mathcal{I}_1\times \mathcal{I}_2$ satisfies strong ESC.   
\end{proof}
\begin{remark}
    The previous theorem plays a considerable role in the context of extending the Hochman \cite[Theorem 1.4]{MHOCHMAN1} groundbreaking result to the product of IFSs. 
    % Meanwhile, B\'ar\'any and Verma \cite{BB2} introduced a weak ESC; it remains an open question whether the result holds in that case as well. 
\end{remark}
Consider an IFS $\{\mathbb{R}^m;h_1,h_2,\ldots, h_N\}$ on $\mathbb{R}^m$ that consists of a finite number of affine mappings. Then, there exists a unique nonempty compact set $A\subset\mathbb{R}^m$ (termed as an invariant set) such that $A=\bigcup_{i=1}^Nh_i(A).$ Let $\text{co}(A)$ be the closed convex hull of $A.$ Define $\Delta_n^* $ as 
\begin{equation}\label{delta2}
    \Delta_n^*:=\inf\{\mathfrak{H}(h_i(\text{co}(A)), h_j(\text{co}(A))):i,j\in\{1,2,\ldots,N\}^n,i\ne j\}.
\end{equation}
Then, IFS $\{\mathbb{R}^m; h_1,h_2,\ldots, h_N\}$ is said to satisfy the \textbf{modified ESC} if there exists a $\delta>0$ such that $\Delta_n^*>\delta^n,$ for infinitely many $n\in \mathbb{N}.$ Similarly, we can define \textbf{modified strong ESC}.\par
It is worth emphasizing that our definition of ESC is natural from a geometric perspective and significantly convenient. Computability and applicability of ESC and modified ESC might be seen as one being more algebraic in nature and the other is more geometric in nature. It is noted that several researchers \cite{BB2,BB3,AR4} attempt to generalize and weaken the concept of ESC introduced by Hochman \cite{MHOCHMAN,MHOCHMAN1}. In a way, B\'ar\'any and M. Verma \cite{BB2} defined weak ESC for self-similar IFS, ESC for graph-directed self-similar IFS, ESC for common fixed point system. Above all that, the most fundamental advantage of the modified ESC is that it is defined not only for self-similar IFS but also extends to more general classes of IFS, including self-affine IFS and self-conformal IFS. 
% It is worth emphasizing that our definition of ESC is natural from a geometric perspective and significantly convenient from a computational point of view, whereas in the previously introduced definition, obtaining effective estimates is considerably more challenging. It is noted that several researchers attempted to generalize the concept of ESC introduced by Hochman \cite{MHOCHMAN,MHOCHMAN1}. In a way, B\'ar\'any and M. Verma \cite{BB2} defined weak ESC for self-similar IFS, ESC for graph-directed self-similar IFS, ESC for common fixed point system, including some other generalization. Above all that, the most fundamental advantage of the modified ESC is that it is defined not only for self-similar IFS but also extends to more general classes of IFS, including self-affine IFS and self-conformal IFS. 

\begin{example}
Some basic examples are presented for the reader's convenience.  Consider the middle third Cantor set $\mathcal{C}$ on $[0,1].$ Then, its IFS is given as $\{[0,1]:h_1(x)=\frac{x}{3}, h_2(x)=\frac{x}{3}+\frac{2}{3}\}.$ Clearly, $$\mathcal{C}=h_1(\mathcal{C})\cup h_2(\mathcal{C}).$$
Since the contraction ratios are the same for both maps in the IFS, only the translation vectors will play a role. For $i=11\ldots1\in \{1,2\}^n,$ we get $h_i(x)=\frac{x}{3^n},$ and for $i=22\ldots2\in \{1,2\}^n,$ we have $h_i(x)=\frac{1}{3^n}(x+2)+\frac{2}{3^{n-1}}+\cdots+\frac{2}{3}.$ Choose $i=11\ldots 1$ and $j=11\ldots12\in \{1,2\}^n.$ Then, $h_j(x)=\frac{x}{3^n}+\frac{2}{3^n}$ and   $$d(h_i,h_j)=\frac{2}{3^n}.$$ It follows that $$\Delta_n=\inf\{d(h_i,h_j):i,j\in\{1,2\}^n,i\ne j\}=\frac{2}{3^n}>\frac{1}{3^n}.$$ 
Clearly, $\delta=\frac{1}{3}.$ Further, the closed convex hull of the middle third Cantor set $\mathcal{C}$ is $[0,1].$ Then,
\begin{align*}
    \Delta_n^*&=\inf\{\mathfrak{H}(h_i(\text{co}(\mathcal{C})),h_j(\text{co}(\mathcal{C}))):i,j\in\{1,2\}^n, i\ne j\}\\
    &=\inf\{\mathfrak{H}(h_i([0,1]),h_j([0,1])):i,j\in\{1,2\}^n, i\ne j\}.
\end{align*}
Choose $i=11\ldots 1$ and $j=11\ldots12\in \{1,2\}^n.$ Then, $h_i([0,1])=\big[0,\frac{1}{3^n}\big]$ and $h_j([0,1])=\big[\frac{2}{3^n},\frac{1}{3^{n-1}}\big].$ Also $$\mathfrak{H}(h_i([0,1]),h_j([0,1]))=\mathfrak{H}\Big(\Big[0,\frac{1}{3^n}\Big],\Big[\frac{2}{3^n},\frac{1}{3^{n-1}}\Big]\Big)=\frac{2}{3^n},$$
and \begin{align*}
   \Delta_n^* =\inf\{\mathfrak{H}(h_i([0,1]),h_j([0,1])):i,j\in\{1,2\}^n, i\ne j\}=\frac{2}{3^n}>\frac{1}{3^n}.
\end{align*}
This clearly shows that the IFS satisfies both modified ESC and modified strong ESC.
The second example can be considered as Sierpi\'nski gasket on $\mathbb{R}^2.$ Its IFS is elaborated as $\{\mathbb{R}^2:~h_1,h_2,h_3\},$ where
\begin{align*}
    h_1\begin{pmatrix}
x \\
y 
\end{pmatrix}&=\frac{1}{2}\begin{bmatrix}
1 & 0 \\
0 & 1
\end{bmatrix}\begin{pmatrix}
x \\
y 
\end{pmatrix},\\
h_2\begin{pmatrix}
x \\
y 
\end{pmatrix}&=\frac{1}{2}\begin{bmatrix}
1 & 0 \\
0 & 1
\end{bmatrix}\begin{pmatrix}
x \\
y 
\end{pmatrix}+\begin{pmatrix}
\frac{1}{2} \\
0
\end{pmatrix},\\
h_3\begin{pmatrix}
x \\
y 
\end{pmatrix}&=\frac{1}{2}\begin{bmatrix}
1 & 0 \\
0 & 1
\end{bmatrix}\begin{pmatrix}
x \\
y 
\end{pmatrix}+\begin{pmatrix}
\frac{1}{4} \\
\frac{\sqrt{3}}{4}
\end{pmatrix}.
\end{align*}
Now we will try to estimate
$$\Delta_n=\inf\{d(h_i,h_j):i,j\in\{1,2,3\}^n,i\ne j\}.$$
Choose $i,j\in\{1,2,3\}^n,i\ne j$ as $i=11\ldots1$ and $j=11\ldots12.$ Then,
\begin{align*}
    h_i\begin{pmatrix}
x \\
y 
\end{pmatrix}=\frac{1}{2^n}\begin{bmatrix}
1 & 0 \\
0 & 1
\end{bmatrix}\begin{pmatrix}
x \\
y 
\end{pmatrix}~\text{ and }~
h_j\begin{pmatrix}
x \\
y 
\end{pmatrix}=\frac{1}{2^n}\begin{bmatrix}
1 & 0 \\
0 & 1
\end{bmatrix}\begin{pmatrix}
x \\
y 
\end{pmatrix}+\begin{pmatrix}
\frac{1}{2^n} \\
0
\end{pmatrix}.
\end{align*}
Thus, $d(h_i,h_j)=\frac{1}{2^n}$ and $\Delta_n=\frac{1}{2^n}.$ Now the closed convex hull of the Sierpi\'nski gasket is an equilateral triangle $\triangle$ with vertices $(0,0),(1,0,),(\frac{1}{2},\frac{\sqrt{3}}{2}),$ so
\begin{align*}
    \Delta_n^*&=\inf\{\mathfrak{H}(h_i(\triangle),h_j(\triangle)):i,j\in\{1,2,3\}^n, i\ne j\}.
\end{align*}
Choose $i=11\ldots 1$ and $j=11\ldots12\in \{1,2,3\}^n.$ Then, $h_i(\triangle)=$ an equilateral triangle $\triangle$ with vertices $(0,0),(\frac{1}{2^n},0),(\frac{1}{2^{n+1}},\frac{\sqrt{3}}{2^{n+1}})$ and $h_j(\triangle)=$ an equilateral triangle $\triangle$ with vertices $(\frac{1}{2^n},0),(\frac{1}{2^{n-1}},0),\\(\frac{3}{2^{n+1}},\frac{\sqrt{3}}{2^{n+1}}).$ Then, $\mathfrak{H}(h_i(\triangle),h_j(\triangle))=\frac{1}{2^n}$ and $\Delta_n^*=\frac{1}{2^n}.$ This clearly shows that the IFS satisfies both modified ESC and modified strong ESC.
  
\end{example}
\begin{remark}
 It is remarkable that $\Delta_n=0,$ for all $n\in\mathbb{N}$ if and only if $\Delta_n^*=0,$ for all $n\in\mathbb{N}.$ Let $\Delta_n^*=0,$ for all $n\in\mathbb{N}.$ Since the infimum is taken over a finite set, there exist $i,j\in\{1,2,\ldots,N\}^n$ with $ i\ne j$ such that
$$\mathfrak{H}(h_i(\text{co}(A)), h_j(\text{co}(A)))=0.$$
 It follows that $h_i=h_j$ on $co(A),$ that is, the maps $h_i$ and $h_j$ are same on the closed convex hull of the attractor. So, $d(h_i,h_j)=0,$ claiming that $\Delta_n=0,$ for all $n\in\mathbb{N}.$ On the same lines converse part is done. 
    
\end{remark}

 % Similarly, the converse part is left for the reader.
% Since the Hausdorff distance between two sets is equal to the Hausdorff distance between its convex hull, we get $h_i(A)=h_j(A).$ This gives $h_i=h_j$ on $A.$ and $\Delta_n=0.$ Similarly, we can show that if $\Delta_n=0$ then $\Delta_n^*=0.$
\begin{theorem}\label{th3.4}
    Consider an IFS consisting of similarity transformations $(h_i(x)=c_iU_ix+a_i, i=1,2,\ldots,N)$ on $\mathbb{R}^m,$ then there exists a constant $k_*\in \mathbb{R}$ such that $\Delta_n^* \le k_*\Delta_n,$ for all $n\in\mathbb{N}.$
\end{theorem}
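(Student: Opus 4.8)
\section*{Proof proposal}

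The plan is to establish the stronger pointwise inequality
\[
\mathfrak{H}\big(h_i(\text{co}(A)),h_j(\text{co}(A))\big)\le k\, d(h_i,h_j)
\]
for \emph{every} pair $i\ne j$ of words in $\{1,\dots,N\}^n$, with a constant $k$ that does not depend on $n$ or on the pair; taking the infimum over all such pairs on both sides then yields $\Delta_n^*\le k\Delta_n$ by the definitions \eqref{delta1} and \eqref{delta2}. Thus the problem reduces to a single comparison: how the Hausdorff distance between the images of a fixed bounded set under two similarities is controlled by Hochman's parametric distance $d$ between those similarities.

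First I would record the structural facts. Since $A$ is the attractor of a finite family of contractions it is compact, hence $\text{co}(A)$ is compact and $M:=\sup_{x\in\text{co}(A)}\|x\|_2<\infty$. For a word $i=i_1\cdots i_n$ the composition $h_i=h_{i_1}\circ\cdots\circ h_{i_n}$ is again a similarity; absorbing the sign of each ratio into the orthogonal part, write $h_i(x)=c_iU_ix+a_i$ with $c_i=|c_{i_1}|\cdots|c_{i_n}|\in(0,1)$, $U_i$ orthogonal and $a_i\in\mathbb{R}^m$. The key point is that $c_i<1$ for every word of length $n\ge1$, which is exactly where contractivity will enter.

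Next comes the core estimate. Fix a bounded set $K$ and similarities $f(x)=sOx+d$, $f_*(x)=s_*O_*x+d_*$ with $s,s_*\in(0,1]$. From the formula $\mathfrak{H}(E,F)=\max\{\sup_{a\in E}\inf_{b\in F}\|a-b\|_2,\sup_{b\in F}\inf_{a\in E}\|a-b\|_2\}$ and the observation that for each $x\in K$ the points $f(x)$ and $f_*(x)$ are admissible competitors for the two inner infima, one gets $\mathfrak{H}(f(K),f_*(K))\le\sup_{x\in K}\|f(x)-f_*(x)\|_2$. Writing $f(x)-f_*(x)=(sOx-sO_*x)+(sO_*x-s_*O_*x)+(d-d_*)$ and using that $O_*$ is orthogonal,
\[
\|f(x)-f_*(x)\|_2\le s\|O-O_*\|\,\|x\|_2+|s-s_*|\,\|x\|_2+\|d-d_*\|_2 .
\]
Since $s\le1$, and since $|s-s_*|\le|\log s-\log s_*|$ for $s,s_*\in(0,1]$ (by the mean value theorem applied to $t\mapsto e^t$, using $e^t\le1$ on the relevant interval, which lies in $(-\infty,0]$), for $x\in\text{co}(A)$ this gives
\[
\|f(x)-f_*(x)\|_2\le M\|O-O_*\|+M\,|\log s-\log s_*|+\|d-d_*\|_2\le\max\{1,M\}\, d(f,f_*).
\]

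Finally I would apply this with $f=h_i$, $f_*=h_j$ and $K=\text{co}(A)$ for each $i\ne j$ in $\{1,\dots,N\}^n$; the resulting bound $\mathfrak{H}(h_i(\text{co}(A)),h_j(\text{co}(A)))\le\max\{1,M\}\,d(h_i,h_j)$ holds uniformly, so passing to the infimum over all admissible pairs proves $\Delta_n^*\le k\Delta_n$ with $k=\max\{1,\sup_{x\in\text{co}(A)}\|x\|_2\}$. I do not expect any genuine obstacle here: the only points demanding a little care are the reduction to positive similarity ratios (so that $\log$ is meaningful, matching the convention behind $d$) and the elementary inequality $|s-s_*|\le|\log s-\log s_*|$ — and it is precisely the contractivity $c_i<1$ of the iterates that makes this inequality applicable and thereby keeps the constant $k$ independent of $n$.
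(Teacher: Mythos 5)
Your proposal is correct and follows essentially the same route as the paper's proof: bound $\mathfrak{H}(h_i(\mathrm{co}(A)),h_j(\mathrm{co}(A)))$ by $\sup_{x\in \mathrm{co}(A)}\|h_i(x)-h_j(x)\|_2$, split the linear part, control the ratio difference by the log-difference via the mean value theorem (valid since the composed ratios lie in $(0,1)$), and conclude with the uniform constant $\max\{1,\sup_{x\in\mathrm{co}(A)}\|x\|_2\}$ before taking infima. The only cosmetic differences are your algebraic split $sO-s_*O_*=s(O-O_*)+(s-s_*)O_*$ versus the paper's $(c_i-c_j)U_i+c_j(U_i-U_j)$, and your explicit remark on absorbing signs of the ratios, which the paper leaves implicit.
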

\begin{proof}
    Let $i=i_1i_2\ldots i_n\in\{1,2,\ldots,N\}^n.$  Then,
\begin{align*}
    h_i(x) &= h_{i_1} \circ h_{i_2} \circ \dots \circ h_{i_n}(x) \\
    &= h_{i_1} \circ h_{i_2} \circ \dots \circ h_{i_{n-1}}(c_{i_n} U_{i_n} x + a_{i_n}) \\
    &= h_{i_1} \circ h_{i_2} \circ \dots \circ h_{i_{n-2}}( c_{i_{n-1}}c_{i_n} U_{i_{n-1}}U_{i_n}  x + c_{i_{n-1}}U_{i_{n-1}}a_{i_n}+a_{i_{n-1}})\\
&= (c_{i_1} \dots  c_{i_{n-1}} c_{i_n})(U_{i_1} \dots U_{i_n} x) +
      (c_{i_1}\ldots c_{i_{n-1}}U_{i_1}\ldots U_{i_{n-1}}a_{i_n}+\\
        &\quad c_{i_1}\ldots c_{i_{n-2}}U_{i_1}\ldots U_{i_{n-2}}a_{i_{n-1}}+\cdots+c_{i_1}U_{i_1}a_{i_2}+a_{i_1}).
\end{align*}
%Note that $h_i(x)=c_{i_1}\ldots c_{i_n}U_{i_1}\ldots U_{i_n}x+c_{i_1}\ldots c_{i_{n-1}}U_{i_1}\ldots U_{i_{n-1}}a_{i_n}+c_{i_1}\ldots c_{i_{n-2}}U_{i_1}\ldots U_{i_{n-2}}a_{i_{n-1}}+\cdots+c_{i_1}U_{i_1}a_{i_2}+a_{i_1}.$ Thus, for $i,j\in \{1,2,\ldots,N\}^n$ with $i\neq j,$ we have 
    % \begin{align*}
    %     d(h_i,h_j)&=|\log(c_{i_1}\ldots c_{i_n})-\log(c_{j_1}\ldots c_{j_n})|+\|U_{i_1}\ldots U_{i_n}-U_{j_1}\ldots U_{j_n}\|+\\&\quad\|(c_{i_1}\ldots c_{i_{n-1}}U_{i_1}\ldots U_{i_{n-1}}a_{i_n}+c_{i_1}\ldots c_{i_{n-2}}U_{i_1}\ldots U_{i_{n-2}}a_{i_{n-1}}+\cdots\\
    %     &\quad+c_{i_1}U_{i_1}a_{i_2}+a_{i_1})-(c_{j_1}\ldots c_{j_{n-1}}U_{j_1}\ldots U_{j_{n-1}}a_{j_n}\\
    %     &\quad+c_{j_1}\ldots c_{j_{n-2}}U_{j_1}\ldots U_{j_{n-2}}a_{j_{n-1}}+\cdots+c_{j_1}U_{j_1}a_{j_2}+a_{j_1})\|_2
    % \end{align*}
    Let $c_i =  c_{i_1} \dots c_{i_n},~U_i=U_{i_1} \dots U_{i_n},$ and 
    $d_i=c_{i_1}\ldots c_{i_{n-1}}U_{i_1}\ldots U_{i_{n-1}}a_{i_n}+c_{i_1}\ldots c_{i_{n-2}}U_{i_1}\ldots U_{i_{n-2}}a_{i_{n-1}}\\+\cdots+c_{i_1}U_{i_1}a_{i_2}+a_{i_1}.$
It follows that $h_i(x) = c_i U_i x + d_i.$ For $i,j\in \{1,2,\ldots,N\}^n,$ we have
\begin{align*}
   &d(h_i,h_j)=\left|\log c_i-\log c_j\right|+\left\|U_i-U_j\right\| 
+\left\|d_i-d_j\right\|_2, ~~\text{ and} \\
\mathfrak{H}(h_i(\text{co}(A)), &h_j(\text{co}(A)))=\max\Big\{\sup_{x\in \text{co}(A)}\inf_{y\in \text{co}(A)}\|h_i(x)-h_j(y)\|_2,\sup_{y\in \text{co}(A)}\inf_{x\in \text{co}(A)}\|h_i(x)-h_j(y)\|_2\Big\}.
\end{align*}
    % $$.$$
    % Also,
    % \small{\begin{align*}
    %     \mathfrak{H}(h_i(\text{co}A), h_j(\text{co}A))=\max\Big\{\sup_{x\in \text{co}A}\inf_{y\in \text{co}A}\|h_i(x)-h_j(y)\|_2,\sup_{y\in \text{co}A}\inf_{x\in \text{co}A}\|h_i(x)-h_j(y)\|_2\Big\}.
    % \end{align*}}
    Now,
    \begin{align*}
     \inf_{y\in \text{co}(A)}\|h_i(x)-h_j(y)\|_2   \le \|h_i(x)-h_j(x)\|_2.
    \end{align*}
    Taking the supremum of both sides, we get
    \begin{align*}
        \sup_{x\in \text{co}(A)}\inf_{y\in \text{co}(A)}\|h_i(x)-h_j(y)\|_2   \le \sup_{x\in \text{co}(A)}\|h_i(x)-h_j(x)\|_2.
    \end{align*}
    Similarly,
    \begin{align*}
\sup_{y\in \text{co}(A)}\inf_{x\in \text{co}(A)}\|h_i(x)-h_j(y)\|_2\le\sup_{y\in \text{co}(A)}\|h_i(y)-h_j(y)\|_2.
    \end{align*}
    This gives 
    \begin{align}\label{eq38}
       \mathfrak{H}(h_i(\text{co}(A)), h_j(\text{co}(A))) \le\sup_{x\in \text{co}(A)}\|h_i(x)-h_j(x)\|_2.
    \end{align}
Consider
\begin{align*}
 \|h_i(x)-h_j(x)\|_2 & =\|c_i U_i x+d_i-(c_j U_j x+d_j)\|_2\\
% & =\|(c_i U_i-c_j U_j) x+(d_i-d_j)\|_2\\
& \leq \|( c_i U_i-c_j U_j) x\|_2+\| d_i-d_j \|_2\\
&\leq\|c_i U_i-c_j U_j\| \cdot\|x\|_2  +\| d_i-d_j \|_2\\&
\leq\|c_i U_i-c_j U_j\| \cdot k+\| d_i-d_j \|_2,    
\end{align*}
where $k=\max \{\|x\|_2: x \in \text{co}(A)\}.$ Let $k_*=\max \{k, 1\}.$
\begin{align*}
\|h_{i}(x)-h_{j}(x)\|_2 & \leq k_*(\|c_{i} U_{i}-c_{j} U_{j}\|+\|d_{i}-d_{j}\|_2) \\
& \leq k_*(|c_{i}-c_{j}|\| U_{i}\|+| c_{j}|\|U_{i}-U_{j}\|+\|d_{i}-d_{j}\|_2) \\
& \leq  k_*(| c_{i}-c_{j}|+\|U_{i}-U_{j}\|+\|d_{i}-d_{j}\|_2),   
\end{align*}
as $c_j<1, \ \forall ~j\in \{1,2,\ldots,N\}^n. $ Using mean value theorem on interval $[c_i,c_j]$ if $c_i<c_j,$ (otherwise $[c_j,c_i],$ we get $|c_{i}-c_j| <|\log c_{i}-\log c_{j}|.$ So, 
\begin{align*}
\|h_i(x)-h_j(x)\|_2 &\leq k_*(|\log c_i-\log c_j| +\|U_i-U_j\| +\|d_i-d_j\|_2) \\
& \leq k_* \ d\left(h_i, h_j\right).   
\end{align*}
Consequently, $\sup_{x\in \text{co}(A)}\|h_i(x)-h_j(x)\|_2 \leq k_* \ d(h_i, h_j).$ Using equation \eqref{eq38}, we get
% Let $c_2=\max \{k, 1\}$
% $$
% \begin{aligned}
% \left\|h_{i}(x)-h_{j}(x)\right\|_2 & \leq c_2\left(\left\|c_{i} U_{i}-c_{j} U_{j}\right\|+\left\|a_{i}-a_{j}\right\|_2\right) \\
% \left\|c_{i} U_{i}-c_{j} U_{j}\right\| & \leq|c_{i}-c_{j}|\| U_{i}\|+| c_{j}|\left\|U_{i}-U_{j}\right\| \\
% & \leq | c_{i}-c_{j}|+\left\|U_{i}-U_{j}\right\|
% \end{aligned}
% $$
% $c_j<1, \ \forall j $ $\implies$
% $\left|c_{i}-c_j\right| \leq\left|\log c_{i}-\log c_{j}\right| $

% $$
% \begin{aligned}
% \left\|h_i(x)-h_j(x)\right\|_2 &\leq c_2\left(\left|\log c_i-\log c_j\right|\right. +\left\|U_i-U_j\right\| 
% \left.+\left\|a_i-a_j\right\|_2\right) \\
% & \leq c_2 \ d\left(h_i, h_j\right)\\
% \sup_{x\in \text{co}A} \left\|h_i(x)-h_j(x)\right\|_2 &\leq c_2 \ d\left(h_i, h_j\right)
%  \end{aligned}$$

$$  \mathfrak{H}(h_i(\text{co}(A)), h_j(\text{co}(A))) \leq k_* \ d(h_i, h_j),
~~ \forall  ~i, j. $$ 
Since the constant $k_*$ is independent of $n$, taking infimum on both sides, we have $$ \Delta_n^* \le k_*\Delta_n.$$
Clearly, this holds for all $n\in\mathbb{N}.$ This completes the proof.

% Now
% % \begin{align*}
% %   d(h_i,h_j)&\le k_n|c_i- c_j|+|d_i-d_j|\\
% %   &\le k_n(|c_i- c_j|+|d_i-d_j|)\\
% %   &\le k_n.2|a(c_i-c_j)+(d_i-d_j)|,
% % \end{align*}
% in case, $ \mathfrak{H}(h_i(\text{co}A), h_j(\text{co}A))=|a(c_i-c_j)+(d_i-d_j)|.$

% There exists a natural number $N_*$ such that $|a|\le k_n,$ for all $n>N_*.$ So,
% \begin{align*}
%     \mathfrak{H}(h_i(\text{co}A), h_j(\text{co}A))\le k_n|c_i-c_j|+|d_i-d_j|.
% \end{align*}
% Now 
%\begin{align*}
 %   &\|h_i(x)-h_j(x)\|_2\\=&\|(C_ix+D_i)-(C_jx+D_j)\|_2 \\
 %   \le& \|C_i-C_j\|_2\|x\|_2+\|D_i-D_j\|_2\\
 %   =&\|c_{i_1}\ldots c_{i_n}U_{i_1}\ldots U_{i_n}-c_{j_1}\ldots c_{j_n}U_{j_1}\ldots U_{j_n}\|\|x\|_2+\\
  %  \quad &\|c_{i_1}\ldots c_{i_{n-1}}U_{i_1}\ldots U_{i_{n-1}}a_{i_n}+c_{i_1}\ldots c_{i_{n-2}}U_{i_1}\ldots U_{i_{n-2}}a_{i_{n-1}}+\cdots+c_{i_1}U_{i_1}a_{i_2}+a_{i_1}-\\
  %  \quad &(c_{j_1}\ldots c_{j_{n-1}}U_{j_1}\ldots U_{j_{n-1}}a_{j_n}+c_{j_1}\ldots c_{j_{n-2}}U_{j_1}\ldots U_{j_{n-2}}a_{j_{n-1}}+\cdots+c_{j_1}U_{j_1}a_{j_2}+a_{j_1})\|_2
% \end{align*}
   %  \begin{align*}
   % \mathfrak{H}(h_i(\text{co}A), h_j(\text{co}A))=\inf\{\epsilon>0:h_i(\text{co}A)\subset (h_j(\text{co}A))_{\epsilon}, h_j(\text{co}A)\subset (h_i(\text{co}A))_{\epsilon}\}
   % \end{align*}
\end{proof}
    
\begin{proposition}\label{prop1}
 Consider a homogeneous IFS consisting of similarity transformations $(h_i(x)=cx+d_i, i=1,2,\ldots,N)$ on $\mathbb{R},$ then the above two definitions (mentioned in equations \eqref{delta1} and \eqref{delta2}) of ESC coincide, that is, $\Delta_n = \Delta_n^*,$ for all $n\in\mathbb{N}.$
 % there exist constants $k$ and $k_*$ such that $k \Delta_n \le \Delta_n^* \le k_*\Delta_n$.
 \end{proposition}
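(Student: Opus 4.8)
The plan is to prove the two quantities agree by showing $\Delta_n^* \le \Delta_n$ and $\Delta_n^* \ge \Delta_n$ separately, exploiting the fact that the IFS is homogeneous on $\mathbb{R}$, so each composition $h_i$ for $i \in \{1,2,\dots,N\}^n$ is of the form $h_i(x) = c^n x + d_i$ with a \emph{common} contraction ratio $c^n$ depending only on $n$. The inequality $\Delta_n^* \le \Delta_n$ already follows from Theorem \ref{th3.4} once we check that in this setting the constant $k_*$ can be taken to be $1$; but it is cleaner to argue directly. Since $h_i$ and $h_j$ have the same linear part, $h_i(x) - h_j(x) = d_i - d_j$ is constant in $x$, hence $\sup_{x\in\mathrm{co}(A)}\|h_i(x)-h_j(x)\|_2 = |d_i - d_j|$. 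On the other hand, the orthogonal parts coincide ($U_i = U_j = 1$ up to sign, and in the positive-ratio homogeneous case the ``$\|U_i - U_j\|$'' and ``$|\log c_i - \log c_j|$'' terms in $d(h_i,h_j)$ vanish), so $d(h_i,h_j) = |d_i - d_j|$ as well.

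The heart of the matter is therefore to compute $\mathfrak{H}(h_i(\mathrm{co}(A)), h_j(\mathrm{co}(A)))$ and show it equals $|d_i - d_j|$. Here $\mathrm{co}(A)$ is a compact interval $[\alpha,\beta]\subset\mathbb{R}$ (the convex hull of the attractor of an IFS on the line), so $h_i(\mathrm{co}(A)) = [c^n\alpha + d_i,\, c^n\beta + d_i]$ and $h_j(\mathrm{co}(A)) = [c^n\alpha + d_j,\, c^n\beta + d_j]$ — two closed intervals of the \emph{same length} $|c^n|(\beta - \alpha)$, one being a translate of the other by $d_i - d_j$. The key elementary fact is that the Hausdorff distance between two intervals of equal length that differ by a translation $t$ is exactly $|t|$: indeed each point of one interval is within $|t|$ of the corresponding translated point, giving $\mathfrak{H} \le |t|$, while the right endpoint of $[\alpha',\beta']$ (say $\beta' = c^n\beta + d_j$, assuming $d_j > d_i$) is at distance exactly $|t|$ from the nearest point $\beta' - |t|$ of the other interval provided the intervals have positive length (which holds as long as $\mathrm{co}(A)$ is nondegenerate, i.e. $A$ is not a single point). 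This pins down $\mathfrak{H}(h_i(\mathrm{co}(A)), h_j(\mathrm{co}(A))) = |d_i - d_j| = d(h_i,h_j)$ for every pair $i \ne j$.

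Taking the infimum over all pairs $i \ne j$ in $\{1,2,\dots,N\}^n$ then yields $\Delta_n^* = \Delta_n$ directly, without needing separate inequalities. I would organize the write-up as: (1) reduce $h_i$ to the homogeneous form $c^n x + d_i$; (2) compute $d(h_i,h_j) = |d_i - d_j|$; (3) identify $\mathrm{co}(A)$ as an interval and compute $h_i(\mathrm{co}(A))$, $h_j(\mathrm{co}(A))$ as equal-length translated intervals; (4) prove the equal-length-intervals Hausdorff-distance lemma and conclude $\mathfrak{H}(h_i(\mathrm{co}(A)),h_j(\mathrm{co}(A))) = |d_i-d_j|$; (5) take infima. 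The only genuine subtlety — and the step I would be most careful about — is the degenerate edge case: if $A$ is a single point then $\mathrm{co}(A)$ is a point and $\mathfrak{H}$ of two singletons is just $|d_i - d_j|$ anyway, so the identity still holds; one should note this so the equal-length interval lemma's "positive length" hypothesis doesn't leave a gap. A secondary point to state carefully is the sign convention on $c$: if $c<0$ the map $h_i$ reverses orientation but still sends $[\alpha,\beta]$ to an interval of length $|c^n|(\beta-\alpha)$, and since both $h_i,h_j$ apply the \emph{same} $c^n$ the two images remain congruent translates, so nothing changes.
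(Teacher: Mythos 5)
Your proposal is correct and follows essentially the same route as the paper: reduce each level-$n$ composition to the form $c^n x + d_i$, observe $d(h_i,h_j)=|d_i-d_j|$, identify $h_i(\mathrm{co}(A))$ and $h_j(\mathrm{co}(A))$ as equal-length translated intervals whose Hausdorff distance is exactly $|d_i-d_j|$ (the paper does this via the endpoint formula $\max\{|ac+d_i-(ac+d_j)|,|bc+d_i-(bc+d_j)|\}$, you via a small translation lemma), and take infima. Your extra attention to the degenerate single-point attractor and to the sign of $c$ is a harmless refinement rather than a different argument.
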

  \begin{proof}
    % The direct part is just proved in the previous theorem. For converse inequality, 
    Consider $i,j\in\{1,2,\ldots,N\}^n,$ we get
\begin{align*}
    d(h_i,h_j)=|d_i-d_j|.
    % &\le L|c_i-c_j|+|d_i-d_j|,
\end{align*}
In $\mathbb{R},$ the closed convex hull of attractor $A$ is a compact interval $I=[a,b]$ say, then
\begin{align*}
     \mathfrak{H}(h_i(\text{co}(A), h_j(\text{co}(A)))&=\mathfrak{H}(h_i(I), h_j(I))\\
     &=\mathfrak{H}([ac+d_i,bc+d_i], [ac+d_j,bc+d_j])\\
     &=\max\{|ac+d_i-(ac+d_j)|,|bc+d_i-(bc+d_j)|\}\\
     &=|d_i-d_j|.
     % \\
     %  &=\max\{|a||c_i-c_j|+|d_i-d_j|,|b||c_i-c_j|+|d_i-d_j|\}.
\end{align*}
Thus, in this case
\begin{align*}
     \mathfrak{H}(h_i(\text{co}(A)), h_j(\text{co}(A)))=|d_i-d_j|= d(h_i,h_j).
     % \\
     %  &=\max\{|a||c_i-c_j|+|d_i-d_j|,|b||c_i-c_j|+|d_i-d_j|\}.
\end{align*}
Taking the infimum on both sides over $i,j\in \{1,2,\ldots,N\}^n,$ we get $\Delta_n^*=\Delta_n,$ for all $n\in\mathbb{N},$ concluding the proof. 
     \end{proof}
     \begin{note}
       If homogeneous IFS on $\mathbb{R}$ satisfies ESC, there exists a constant $\delta>0$ such that $\Delta_n>\delta^n,$ for infinitely many $n\in\mathbb{N}.$ This gives $\Delta_n^*>\delta^n,$ for infinitely many $n\in\mathbb{N}.$ Thus, the homogeneous IFS satisfies modified ESC.  
     \end{note}
The next remark will give some hints on the ESC for the distinct contraction ratios.
  
  \begin{remark}
  
  Consider an IFS on $\mathbb{R}$ with similarity mappings $(h_i(x)=c_ix+d_i,~ i=1,2,\ldots,N)$ with attractor $A.$ The closed convex hull of $A$ is a compact interval $I=[a,b],$ say. Consider the case $0\le a <b,~c_i-c_j>0$ and $d_i-d_j\ge0,$ for all $i,j\in \{1,2,\ldots,N\}^n.$ 
  % (or, $c_i-c_j<0$ and $d_i-d_j\le0$).
Suppose $i,j\in\{1,2,\ldots,N\}^n,$ we get
\begin{align*}
    d(h_i,h_j)=|\log c_i-\log c_j|+|d_i-d_j|.
    % &\le L|c_i-c_j|+|d_i-d_j|,
\end{align*}
The $\log$ function will satisfy $$\frac{|\log c_i-\log c_j|}{|c_i-c_j|}\le \frac{1}{\min_{i\in\{1,2,\ldots,N\}^n}|c_i|}=\frac{1}{(\min_{i\in \{1,2,\ldots,N\}}|c_i|)^n}.$$
Let $k_n=\frac{1}{(\min_{i\in \{1,2,\ldots,N\}}|c_i|)^n}.$ So, 
\begin{align*}
d(h_i,h_j)&\le k_n|c_i- c_j|+|d_i-d_j|\\
 &\le k_n\{(c_i- c_j)+(d_i-d_j)\}.
\end{align*}
% as $\log x$ is the Lipschitz function with Lipschitz constant $L$ say, on $[a,1]$ with $a>1.$ 
Again, notice that
\begin{align*}
     \mathfrak{H}(h_i(\text{co}(A)), h_j(\text{co}(A)))&=\mathfrak{H}(h_i(I), h_j(I))\\
     &=\mathfrak{H}([ac_i+d_i,bc_i+d_i], [ac_j+d_j,bc_j+d_j])\\
     &=\max\{|ac_i+d_i-(ac_j+d_j)|,|bc_i+d_i-(bc_j+d_j)|\}\\
     &=\max\{|a(c_i-c_j)+(d_i-d_j)|,|b(c_i-c_j)+(d_i-d_j)|\}.
     % \\
     %  &=\max\{|a||c_i-c_j|+|d_i-d_j|,|b||c_i-c_j|+|d_i-d_j|\}.
\end{align*}
  Without loss of generality, assume that
    \begin{align*}
       \mathfrak{H}(h_i(\text{co}(A)), h_j(\text{co}(A)))&=b(c_i-c_j)+(d_i-d_j)\\
       &\ge \min\{b,1\}\{(c_i-c_j)+(d_i-d_j)\},
    \end{align*}
    that is, $$\frac{1}{\min\{b,1\}}\mathfrak{H}(h_i(\text{co}(A)), h_j(\text{co}(A)))\ge \{(c_i-c_j)+(d_i-d_j)\}\ge \frac{1}{k_n} d(h_i,h_j).$$
    Taking infimum over $i,j\in \{1,2,\ldots,N\}^n$ both sides, we get $\frac{1}{\min\{b,1\}}\Delta_n^*\ge  \frac{1}{k_n}\Delta_n.$ If the IFS satisfies ESC, there exists a constant $\delta>0$ such that $\Delta_n>\delta^n,$ for infinitely many $n\in\mathbb{N}.$ This gives $\Delta_n^*>\frac{\min\{b,1\}}{k_n}\delta^n=\min\{b,1\}\Big(\min_{i\in\{1,2,\ldots,N\}}|c_i|\delta\Big)^n,$ for infinitely many $n\in\mathbb{N},$ claiming that the IFS satisfies modified ESC. 
    % Thus, the previous proposition holds in this case.
%     \item The second subcase is $c_i-c_j>0$ and $d_i-d_j<0.$ Then without loss of generality, assume that
%     \begin{align*}
%      \mathfrak{H}(h_i(\text{co}A), h_j(\text{co}A))
%      &=|b(c_i-c_j)+(d_i-d_j)|\\
%      &\ge \frac{1}{r_{ij}}(b|(c_i-c_j)|+|d_i-d_j|)\\
%      &\ge \frac{\min\{b,1\}}{r_{ij}}(|c_i-c_j|+|d_i-d_j|),
%      % \\
%      %  &=\max\{|a||c_i-c_j|+|d_i-d_j|,|b||c_i-c_j|+|d_i-d_j|\}.
% \end{align*}
% where $r_{ij}\in\mathbb{N}.$ The Archimedean property of real numbers for sure the existence of $r_{ij}.$ Further
% % \begin{align*}
% %     \mathfrak{H}(h_i(\text{co}A), h_j(\text{co}A))\ge l(|b|.|c_i-c_j|+|d_i-d_j|)\ge  l\min\{b,1\}(|c_i-c_j|+|d_i-d_j|).
% % \end{align*}
% % So,
% $$\frac{r_{ij}}{\min\{b,1\}}\mathfrak{H}(h_i(\text{co}A), h_j(\text{co}A))\ge \{|c_i-c_j|+|d_i-d_j|\}\ge \frac{1}{k_n} d(h_i,h_j).$$
% Assume that $\inf_{i,j\in \{1,2,\ldots,N\}^n}r_{ij}=r_n.$  Taking infimum over $i,j\in \{1,2,\ldots,N\}^n$ both sides, we get $\frac{r_n}{\min\{b,1\}}\Delta_n^*\ge  \frac{1}{k_n}\Delta_n.$ Since the IFS satisfies ESC, so the modified ESC.
%     % Consider $I=[5,7],h_1(x)=\frac{1}{2}x+2,h_2(x)=\frac{1}{3}x+5.$then 
%     % \begin{align*}
%     %     \mathfrak{H}(h_1(I),h_2(I))=\mathfrak{H}\Big([\frac{5}{2}+2,\frac{7}{2}+2],[\frac{5}{3}+5,\frac{7}{3}+5]\Big)=\max\{|\frac{5}{2}+2-(\frac{5}{3}+5)|,|(\frac{7}{2}+2)-\frac{7}{3}+5|\}=|3-\frac{5}{6}|<3+\frac{1}{6}.
%     % \end{align*}

    \end{remark}

  In the upcoming theorems, several dense subsets of $\chi(\mathbb{R}^m)$ are explored, which will be helpful in the dimension-preserving approximation of sets.
\begin{theorem}
    For each $\alpha\in[0,m],$ the set $A_\alpha=\{E\in \chi(\mathbb
{R}^m): \dim_\mathcal{A}E=\alpha\}$ is dense in $\chi(\mathbb
    {R}^m).$ 
\end{theorem}
 \begin{proof}
     Let $\epsilon>0$ and $E\in \chi(\mathbb{R}^m).$ Since $E$ is compact subset of $\mathbb{R}^m,$ there exist $e_1,e_2,\ldots,e_n\in E$ such that $E\subset \bigcup_{j=1}^nB(e_j,\epsilon),$ where $B(e_j,\epsilon)$ is the open set centered at $e_j$ and radius $\epsilon.$ Subsequently, consider $F=\{e_1,e_2,\dots,e_n\}\in \chi(\mathbb{R}^m).$ In view of definition of Hausdorff metric and Example \ref{eg1}, we get 
     $$\mathfrak{H}(E,F)<\epsilon~~\text{ and }~~\dim_{\mathcal{A}}F=0.$$
     This shows that the assertion is true in the case of $\alpha=0.$ Again, assume $\alpha\in(0,m].$ Since the Assouad dimension satisfies the open set property (see Theorem \ref{th1}), $\dim_{\mathcal{A}}B(e_j,\epsilon)=m.$ Now using \cite[Theorem 1]{Chen}, there exists a set $F_{e_1}\subset B(e_1,\epsilon)$ such that $\dim_{\mathcal{A}}F_{e_1}=\alpha.$ Define $F'=F\cup F_{e_1}.$ In view of the finite stability of the Assouad dimension, we get
     $$\dim_{\mathcal{A}}F'=\max\{\dim_{\mathcal{A}}F,\dim_{\mathcal{A}}F_{e_1}\}=\alpha,$$
      and $\mathfrak{H}(E,F')<\epsilon,$ conclude the claim.
 \end{proof} 
\begin{theorem}
    For each $\alpha\in[0,m],$ the set $H_\alpha=\{E\in \chi(\mathbb
    R^m): \dim_\mathcal{H}E=\alpha\}$ is dense in $\chi(\mathbb
    R^m).$ 
\end{theorem}
\begin{proof}
    Proof follows on similar arguments as in the previous theorem, hence we avoid the details.
\end{proof}
Note that for any $E\subset\mathbb{R}^m,$ we have $\dim_\mathcal{H}E\le\dim_\mathcal{A}E.$ Thus, our next result is worth mentioning.
\begin{theorem}
    The set $\mathcal{D}=\{E\in\chi(\mathbb{R}^m):\dim_\mathcal{H}E\ne\dim_\mathcal{A}E\}$ is dense in $\chi(\mathbb{R}^m).$ 
\end{theorem}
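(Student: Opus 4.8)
The plan is to exhibit, arbitrarily close to any given $E\in\mathcal{K}(\mathbb{R}^m)$, a compact set whose Hausdorff dimension is strictly less than its Assouad dimension. The most economical way is to recycle the construction from the two preceding theorems: given $\epsilon>0$ and $E\in\mathcal{K}(\mathbb{R}^m)$, cover $E$ by finitely many balls $B(e_1,\epsilon),\dots,B(e_n,\epsilon)$ with $e_j\in E$, and set $F=\{e_1,\dots,e_n\}$, so that $\mathfrak{H}(E,F)<\epsilon$. The set $F$ is finite, hence $\dim_{\mathcal{H}}F=\dim_{\mathcal{A}}F=0$, which is useless by itself; the point is to perturb $F$ into a set that still lies within $\epsilon$ of $E$ but has a dimension gap. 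To do this I would fix the ball $B(e_1,\epsilon)$ and insert into it a compact set $K$ with $\dim_{\mathcal{H}}K<\dim_{\mathcal{A}}K$, then take $F'=F\cup K$. By finite stability of both dimensions (monotonicity and finite stability for Assouad from Theorem \ref{th1}, and the analogous elementary properties of Hausdorff dimension), $\dim_{\mathcal{A}}F'=\max\{0,\dim_{\mathcal{A}}K\}=\dim_{\mathcal{A}}K$ and $\dim_{\mathcal{H}}F'=\max\{0,\dim_{\mathcal{H}}K\}=\dim_{\mathcal{H}}K$, so $F'$ inherits the strict inequality, while $F'\supset F$ and $F'\subset E_\epsilon$ give $\mathfrak{H}(E,F')<\epsilon$ (after mildly shrinking $K$ so it sits inside the $\epsilon$-neighbourhood of $E$, or simply replacing $\epsilon$ by $\epsilon/2$ throughout).

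The one genuine ingredient needed is the existence of a single compact set in $\mathbb{R}^m$ (equivalently, after rescaling, inside any prescribed ball) with $\dim_{\mathcal{H}}<\dim_{\mathcal{A}}$. Such sets are classical: for instance, in $\mathbb{R}$ one may take $K=\{0\}\cup\{1/n:n\in\mathbb{N}\}$, for which $\dim_{\mathcal{H}}K=0$ but $\dim_{\mathcal{A}}K=1$ (this is a standard computation and is exactly the kind of example behind Example \ref{eg1}'s contrast; see \cite{Fraser}). Embedding this $K$ into $\mathbb{R}^m$ via the first coordinate axis and translating/scaling it into $B(e_1,\epsilon)$ gives a compact subset of that ball with $\dim_{\mathcal{H}}=0<1=\dim_{\mathcal{A}}$, using part (2) of Theorem \ref{th1} (bi-Lipschitz invariance of Assouad dimension) and the corresponding Lipschitz-invariance of Hausdorff dimension. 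Alternatively, if one wants the gap to be ``generic'' one could cite \cite[Theorem 1]{Chen} to place a set of any prescribed Assouad dimension $\alpha\in(0,m]$ in the ball whose Hausdorff dimension is $0$ or more generally strictly smaller.

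I would then assemble the argument in the order: (i) reduce to approximating by finite sets unioned with a small ``bad'' set, exactly as in the proof of the $A_\alpha$-density theorem; (ii) state and justify the existence of a compact $K\subset B(e_1,\epsilon)$ with $\dim_{\mathcal{H}}K<\dim_{\mathcal{A}}K$, invoking the explicit sequence example together with bi-Lipschitz invariance; (iii) form $F'=F\cup K$, apply finite stability of both dimensions to conclude $\dim_{\mathcal{H}}F'<\dim_{\mathcal{A}}F'$; (iv) verify $\mathfrak{H}(E,F')<\epsilon$ since $F\subset F'\subset E_\epsilon$ and each $e_j\in F'$. Since $\epsilon>0$ and $E$ were arbitrary, $\mathcal{D}$ is dense.

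There is essentially no hard part here — the theorem is a routine consequence of the machinery already set up for the two preceding density results, plus one well-known example. The only thing requiring a little care is making sure the inserted set $K$ genuinely stays inside the $\epsilon$-neighbourhood of $E$: placing $K$ inside $B(e_1,\epsilon)$ is not quite enough, since $B(e_1,\epsilon)\not\subset E_\epsilon$ in general, so I would instead work with balls of radius $\epsilon/2$ in step (i), so that $K\subset B(e_1,\epsilon/2)\subset E_\epsilon$ automatically. That bookkeeping aside, the proof is short, and indeed the authors may simply write ``the proof proceeds along the lines of the previous two theorems, taking $K$ to be a convergent sequence together with its limit'' — but I would spell out the dimension-gap example explicitly for the reader's benefit.
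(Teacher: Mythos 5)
Your proof is correct, and its skeleton is the same as the paper's: cover $E$ by finitely many balls centred at points $e_1,\dots,e_n\in E$, adjoin to the finite set $\{e_1,\dots,e_n\}$ a compact ``witness'' set with a Hausdorff--Assouad dimension gap placed (after translation and scaling) inside one ball, and conclude via finite stability of both dimensions plus the Hausdorff-metric estimate. The only real difference is the choice of witness: you use the convergent sequence $\{0\}\cup\{1/n:n\in\mathbb{N}\}$ (Hausdorff dimension $0$, Assouad dimension $1$) embedded along a coordinate axis, which works uniformly for every $m\ge 1$ and rests on a completely elementary example from \cite{Fraser}; the paper instead splits into cases, citing \cite[Theorem 2.1.1]{Fraser} for $m=1$ and using the Bedford--McMullen carpet $\mathcal{B}$ (and $\mathcal{B}\times\{0\}^{m-2}$ for $m>2$) as the witness for $m\ge 2$. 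Your route is slightly cleaner in that it avoids this case analysis and any appeal to self-affine carpets; the paper's choice buys nothing extra logically, only a pointer to a richer family of examples. One small remark: your stated worry that $B(e_1,\epsilon)\not\subset E_\epsilon$ is not quite the right concern, since every point of $B(e_1,\epsilon)$ is within $\epsilon$ of $e_1\in E$; the genuine (minor) issue is only whether the Hausdorff distance is strictly below $\epsilon$ when the witness meets the boundary of the ball, and your $\epsilon/2$ adjustment disposes of it in any case.
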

\begin{proof}
       In case of $m=1,$ using \cite[Theorem 2.1.1]{Fraser}, one can clearly show that $\mathcal{D}$ is a non-empty subset of $\chi(\mathbb{R}^m).$ The Bedford-McMullen carpet (denoted as $\mathcal{B}\subset\mathbb{R}^2$) has the Hausdorff dimension strictly less than the Assouad dimension (see \cite[Theorem 2.1]{Fraser1}) is an example in case of $m=2.$ For a higher dimension $(m>2)$, consider the set $\mathcal{B}\times\{(\underset{m-2\text{ times}}{\underbrace{0,0,\dots,0}} )\}=\{(x,y,0,0,\ldots,0)\in\mathbb{R}^m:(x,y)\in\mathcal{B}\},$  concluding the set $\mathcal{D}$ is non-empty. Further, assume $E\in\chi(\mathbb{R}^m)$ and $\epsilon>0.$ Then, there exist $e_1,e_2,\ldots,e_n\in E$ such that $E\subset\cup_{j=1}^nB(e_j,\epsilon).$ Observe that the Hausdorff and the Assouad dimensions are invariant under scaling and translation (see \cite{Fal,Fraser}). Using translation and scaling, we get $\mathcal{B}\times\{(0,0,\ldots,0)\}\subset B(e_1,\epsilon).$ Choose $F=\{e_1,e_2,\ldots,e_n\}\cup (\mathcal{B}\times\{(0,0,\ldots,0)\}).$ Since the Assouad and the Hausdorff dimensions satisfy finite stability, we get
       \begin{align*}
           &\dim_{\mathcal{A}}F=\dim_{\mathcal{A}}(\mathcal{B}\times\{(0,0,\ldots,0)\}),\\
           &\dim_{\mathcal{H}}F=\dim_{\mathcal{H}}(\mathcal{B}\times\{(0,0,\ldots,0)\}).
       \end{align*}
       Clearly, $\dim_{\mathcal{A}}F\ne\dim_{\mathcal{H}}F$ and $\mathfrak{H}(E,F)<\epsilon,$ concluding the assertion.
\end{proof}
\noindent Before mentioning our upcoming theorem, we define the following for an IFS $\mathcal{I}=\{\mathbb{R}^m;h_1,h_2,\ldots, h_N\}$  with $h_i(x)=c_iU_ix+a_i,$ where $0<c_i<1$ and $U_i$ is $m\times m$ orthogonal matrix, $a_i\in \mathbb{R}^m.$
\begin{itemize}
    \item An IFS $\mathcal{I}$ is said to be affinely irreducible if the only trivial affine subspaces are simultaneously preserved by all $h_i.$
    \item A linear subspace $V\subseteq\mathbb{R}^m$ is said to be $D\mathcal{I}$-invariant if it is invariant under the orthogonal parts, i.e., $U_i(V)\subseteq V,$ for all $i.$
\end{itemize}
We now mention the following theorem, which is a conclusion of \cite[Theorem 1.4]{MHOCHMAN1}.
\begin{theorem}\label{th2}
Let $A\subset\mathbb{R}^m$ be the  attractor of an affinely irreducible IFS given as $\mathcal{I}=\{\mathbb{R}^m;h_1,h_2,\ldots, h_N\}$  with $h_i(x)=c_iU_ix+a_i,$ where $0<c_i<1$ and $U_i$ is $m\times m$ orthogonal matrix, $a_i\in \mathbb{R}^m.$ Suppose that $\mathcal{I}$ satisfies ESC     % $-\frac{1}{n}\log\Delta_n\nrightarrow\infty$ 
    and there does not exist a non-trivial $D\mathcal{I}$-invariant linear subspace $V\subseteq\mathbb{R}^m$ and $e\in A$ with $\dim_{\mathcal{H}}(A\cap (V+e))=\dim_{\mathcal{H}} V.$ Then, $\dim_{\mathcal{H}} A=\min\{m,\dim_{S}A\},$ where $\dim_SA$ is the similarity dimension of $A.$
\end{theorem}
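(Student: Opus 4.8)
The plan is to derive Theorem~\ref{th2} as a direct specialization of Hochman's theorem \cite[Theorem 1.4]{MHOCHMAN1}, so the main work is verifying that the hypotheses stated here imply the hypotheses of that theorem. First I would recall Hochman's setup: for a self-similar IFS $\mathcal{I}=\{\mathbb{R}^m; h_i(x)=c_iU_ix+a_i\}$ satisfying ESC, he shows $\dim_{\mathcal{H}} A = \min\{m,\dim_S A\}$ provided there is no ``dimension-obstructing'' affine subspace, i.e. no proper linear subspace $V$ invariant under all the orthogonal parts $U_i$ together with a point $e\in A$ for which the slice $A\cap(V+e)$ has Hausdorff dimension equal to $\dim V$ (this is exactly the exceptional configuration forbidden in our statement). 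So the heart of the argument is: (i) translate the affine irreducibility hypothesis together with the no-exceptional-slice hypothesis into the precise non-degeneracy condition Hochman requires, and (ii) invoke his theorem verbatim.

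Second, I would spell out the role of \emph{affine irreducibility}. The condition that the only affine subspaces simultaneously preserved by all $h_i$ are the trivial ones rules out the ``reducible'' case in which the attractor is contained in (a translate of) a proper affine subspace $W$; in that reducible situation one should instead pass to the induced IFS on $W$ and the dimension formula must be stated relative to $\dim W$. By assuming affine irreducibility we guarantee that the ambient dimension $m$ is the correct one to compare against, so that $\min\{m,\dim_S A\}$ is the right target. I would note that $D\mathcal{I}$-invariance of a linear subspace $V$ (i.e. $U_i(V)\subseteq V$ for all $i$) is the natural linear-algebraic shadow of affine invariance: if $V$ is $D\mathcal{I}$-invariant and $e$ is a suitable point, then $V+e$ is ``almost'' preserved by the IFS, and these are exactly the subspaces along which a dimension drop could occur. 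The hypothesis that no such $V$ and $e\in A$ satisfy $\dim_{\mathcal{H}}(A\cap(V+e))=\dim_{\mathcal{H}} V$ is then precisely the clause that closes off every possible source of dimension deficiency other than those already accounted for by ESC failing (which is excluded) or by an overlap forcing $\dim_S$ itself down.

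Third, having matched hypotheses, the conclusion is immediate: Hochman's theorem applies and yields $\dim_{\mathcal{H}} A=\min\{m,\dim_S A\}$. I would also remark, for completeness, that the reverse inequality $\dim_{\mathcal{H}} A\le \min\{m,\dim_S A\}$ is the trivial bound recalled in the introduction (it holds for any self-similar set with no separation hypothesis), so the content of the theorem is entirely in the lower bound, which is what Hochman supplies.

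\textbf{Main obstacle.} The delicate point is step~(i): making sure that the dichotomy ``affinely irreducible plus no exceptional $D\mathcal{I}$-invariant slice'' is genuinely equivalent to (or at least implies) the exact non-degeneracy condition in \cite[Theorem 1.4]{MHOCHMAN1}, rather than a slightly weaker or stronger one. In particular one must be careful that restricting attention to \emph{linear} $D\mathcal{I}$-invariant subspaces $V$ together with base points $e\in A$ captures all the affine subspaces that could cause trouble; this uses that an affine subspace nearly preserved by the IFS has a well-defined linear direction that must be $D\mathcal{I}$-invariant, and that if a dimension drop occurs it can be localized to a slice through a point of the attractor. Once this translation is pinned down carefully, the rest is a citation.
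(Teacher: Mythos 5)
Your proposal matches the paper exactly: the paper offers no independent argument for this statement, presenting it simply as "a conclusion of" Hochman's Theorem 1.4 in \cite{MHOCHMAN1}, which is precisely your strategy of matching hypotheses and citing that result. Your additional discussion of how affine irreducibility and the no-exceptional-slice condition correspond to Hochman's non-degeneracy hypothesis is a reasonable elaboration of the same citation-based route, not a different proof.
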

Let $\Lambda=\bigcup_{n\in\mathbb{N}}\{1,2,\ldots,N\}^n$ be the collection of all finite alphabets on $\{1,2,\ldots,N\}.$

\begin{theorem}\label{th310}
Let $\mathcal{I}$ be an IFS consisting of a finite number of similarity transformations (say, $h_i(x)=c_iU_ix+a_i,~ i=1,2,\ldots,N$) in $\mathbb{R}^m$ with attractor as $A.$ If there exists an alphabet $i \in \Lambda$ such that the IFS $\mathcal{I}_n=\{\mathbb{R}^m;~ h_{ji}: j \in\{1,2,\ldots,N\}^n \}$ is affinely irreducible with attractor $A_n$ satisfies ESC and there does not exist a non-trivial $D\mathcal{I}_n$-invariant linear subspace $V\subseteq\mathbb{R}^m$ and $e\in A_n$ with $\dim_{\mathcal{H}}(A_n\cap (V+e))=\dim_{\mathcal{H}} V.$ Then, $\dim_{\mathcal{H}} A=\min\{m,\dim_{S}A\},$ where $\dim_SA$ is the similarity dimension of $A.$

\end{theorem}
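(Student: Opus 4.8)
The plan is to deduce Theorem \ref{th310} directly from Theorem \ref{th2} by transferring hypotheses and conclusions between the attractor $A$ of $\mathcal{I}$ and the attractor $A_n$ of the sub-IFS $\mathcal{I}_n$. First I would record the basic relationship between $A$ and $A_n$: since $\mathcal{I}_n = \{\mathbb{R}^m;\ h_{ji} : j\in\{1,2,\ldots,N\}^n\}$, each of its maps is a composition $h_j\circ h_i$ with $j$ of length $n$ and $i\in\Lambda$ fixed, so every map in $\mathcal{I}_n$ is itself a similarity with contraction ratio $c_j c_i$ and orthogonal part $U_j U_i$. The key structural fact is that $A_n\subseteq A$ (indeed $A_n = \bigcup_{j} h_{ji}(A) = h_i\big(\bigcup_j h_j(A)\big) = h_i(A)$ when $|j|=n$ ranges over the full word set, since $\bigcup_{|j|=n} h_j(A) = A$); hence $A_n = h_i(A)$ is just a scaled, rotated, translated copy of $A$. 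Consequently $\dim_{\mathcal{H}} A_n = \dim_{\mathcal{H}} A$ because Hausdorff dimension is invariant under similarities, and the similarity dimensions also match up to the bookkeeping below.

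Next I would apply Theorem \ref{th2} to $\mathcal{I}_n$: by hypothesis $\mathcal{I}_n$ is affinely irreducible, satisfies ESC, and admits no non-trivial $D\mathcal{I}_n$-invariant subspace $V$ and point $e\in A_n$ with $\dim_{\mathcal{H}}(A_n\cap(V+e)) = \dim_{\mathcal{H}} V$. Therefore Theorem \ref{th2} yields $\dim_{\mathcal{H}} A_n = \min\{m, \dim_S A_n\}$. It remains to convert this into the corresponding statement for $A$. For the left-hand side this is immediate from $\dim_{\mathcal{H}} A_n = \dim_{\mathcal{H}} A$. For the right-hand side I need $\min\{m,\dim_S A_n\} = \min\{m,\dim_S A\}$. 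The similarity dimension $\dim_S A$ is the unique $s$ with $\sum_{k=1}^N c_k^s = 1$ (when this is $\le m$), while $\dim_S A_n$ is the unique $t$ with $\sum_{|j|=n}(c_j c_i)^t = c_i^t\big(\sum_{k=1}^N c_k^t\big)^n = 1$. One checks that if $s$ solves $\sum_k c_k^s = 1$ then $\big(\sum_{|j|=n} c_j^s\big) = 1$, so the sub-IFS $\{h_j : |j|=n\}$ has the same similarity dimension $s$; prepending the single map $h_i$ multiplies by the extra factor $c_i^t$, and a short monotonicity argument shows the solution $t$ of $c_i^t(\sum c_k^t)^n=1$ still satisfies $\min\{m,t\} = \min\{m,s\}$ in the regime relevant here — more precisely, both equal $\min\{m, s\}$ because adding the single contraction $h_i$ to an already dimension-$\ge m$ (resp. dimension-$s<m$) system does not change the clamped value. (If one prefers to avoid the subtlety, one can instead invoke Theorem \ref{th2} on the IFS $\mathcal{I}_n' = \{h_j : |j|=n\}$ directly, whose similarity dimension is exactly $s$; affine irreducibility and the ESC/subspace hypotheses for $\mathcal{I}_n$ pass to $\mathcal{I}_n'$ since the two differ only by post-composition with the fixed similarity $h_i$, which is a bi-Lipschitz bijection and hence preserves all the relevant geometric and dimensional data, including $D\mathcal{I}_n$-invariant subspaces up to the rotation $U_i$.)

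The main obstacle is precisely this last bookkeeping step: making sure that (i) the ESC for $\mathcal{I}_n$ genuinely transfers to (or from) the relevant presentation of $A$, using the definition of $\Delta_n$ via composed words, and (ii) the similarity-dimension identity $\min\{m,\dim_S A_n\} = \min\{m,\dim_S A\}$ holds, handling the extra factor $c_i^t$ coming from the prepended letter $i$ and the case distinction as to whether the dimension is saturated at $m$ or not. I expect (i) to be routine once one writes $\Delta_{kn}$ for $\mathcal{I}$ in terms of the words $ji$ and compares with $\Delta_k$ for $\mathcal{I}_n$, and (ii) to reduce to the elementary observation that $\sum_{|j|=n} c_j^s = \big(\sum_k c_k^s\big)^n$, together with monotonicity of $t\mapsto c_i^t(\sum_k c_k^t)^n$. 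Everything else is an invocation of Theorem \ref{th2} plus similarity-invariance of $\dim_{\mathcal{H}}$, i.e. Theorem \ref{th1}(2) applied to the map $h_i$.
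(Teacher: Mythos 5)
Your reduction collapses at the structural claim $A_n=\bigcup_{|j|=n}h_{ji}(A)=h_i(A)$. With the paper's convention $h_{ji}=h_j\circ h_i$, the middle equality is already wrong ($\bigcup_j h_j(h_i(A))\ne h_i\big(\bigcup_j h_j(A)\big)$), but the deeper problem is that the attractor $A_n$ of $\mathcal{I}_n$ is the compact set satisfying $A_n=\bigcup_{|j|=n}h_{ji}(A_n)$, not $\bigcup_{|j|=n}h_{ji}(A)$, and it is in general a much smaller set than $A$; the only usable relation is $A_n\subseteq A$. Consequently neither $\dim_{\mathcal{H}}A_n=\dim_{\mathcal{H}}A$ nor your step (ii), $\min\{m,\dim_S A_n\}=\min\{m,\dim_S A\}$, holds. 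Concretely, for the Cantor IFS $h_1(x)=x/3$, $h_2(x)=x/3+2/3$ with $i=1$, $n=1$, one has $\mathcal{I}_1=\{x/9,\ x/9+2/3\}$, whose attractor $A_1$ contains the fixed point $3/4\notin h_1(A)\subseteq[0,1/3]$ and has $\dim_{\mathcal{H}}A_1=\dim_S A_1=\log 2/(2\log 3)$, strictly smaller than $\dim_{\mathcal{H}}A=\dim_S A=\log 2/\log 3$, with both values below $m=1$. The parenthetical fallback fails as well: the hypotheses are assumed only for $\mathcal{I}_n=\{h_{ji}\}$ and do not pass to $\mathcal{I}_n'=\{h_j:|j|=n\}$; ESC for the words generated by the maps $h_jh_i$ says nothing about the words generated by the $h_j$ alone, and if such a transfer were possible the theorem would be pointless, since the remark following it emphasizes exactly that separation for a level-$n$ subcollection does not imply separation for the original system.

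What is actually needed — and what the paper does — is an argument that never equates the dimensions of $A_n$ and $A$. Apply Theorem \ref{th2} to $\mathcal{I}_n$ to get $\dim_{\mathcal{H}}A_n=\min\{m,\dim_S A_n\}$, where $c_i^{\dim_S A_n}\sum_{|j|=n}c_j^{\dim_S A_n}=1$, and use only $A_n\subseteq A$, i.e.\ $\dim_{\mathcal{H}}A_n\le\dim_{\mathcal{H}}A$. If $\dim_{\mathcal{H}}A_n=m$ one is done; otherwise $\dim_S A_n\le\dim_{\mathcal{H}}A$, and assuming $\dim_{\mathcal{H}}A<\dim_S A$ yields, with $c_{\max}=\max_k c_k<1$,
\[
c_i^{-\dim_S A_n}=\sum_{|j|=n}c_j^{\dim_S A_n}\ \ge\ \sum_{|j|=n}c_j^{\dim_{\mathcal{H}}A}\ \ge\ c_{\max}^{\,n(\dim_{\mathcal{H}}A-\dim_S A)}\sum_{|j|=n}c_j^{\dim_S A}=c_{\max}^{\,n(\dim_{\mathcal{H}}A-\dim_S A)},
\]
whose right-hand side tends to infinity as $n\to\infty$ while the left-hand side stays bounded; this contradiction (which uses the hypothesis for arbitrarily large $n$, not any conjugation by $h_i$) forces $\dim_{\mathcal{H}}A=\dim_S A$. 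This quantitative comparison of similarity dimensions is the ingredient missing from your proposal; similarity-invariance of $\dim_{\mathcal{H}}$ under $h_i$ plays no role.
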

\begin{proof}
% Let us define the IFS $\mathcal{I}_n=\{\mathbb{R}^m;~ h_{ji}: j \in\{1,2,\ldots,N\}^n \} ~(n\in\mathbb{N})$ with attractor as $A_n$ (say) and note the following:
% \begin{itemize}
%     \item If $\mathcal{I}_1$ is affinely irreducible, then so is $\mathcal{I}_n.$
%     \item If $\mathcal{I}_1$ satisfies ESC, then so is $\mathcal{I}_n.$
%     \item There does not exist a non-trivial $D\mathcal{I}_n$-invariant linear subspace $V\subseteq\mathbb{R}^m$ and $e\in A_n$ with $\dim_{\mathcal{H}}(A_n\cap (V+e))=\dim_{\mathcal{H}} V.$\\
%     On the contrary, suppose there exists a non-trivial $D\mathcal{I}_n$-invariant subspace $V'\subseteq\mathbb{R}^m$ and $e'\in A_n$ with $\dim_{\mathcal{H}}(A_n\cap (V'+e'))=\dim_{\mathcal{H}} V'.$ It is obvious that $V'$ is $D\mathcal{I}_1$-invariant subspace of $\mathbb{R}^m.$ Further $A_n\subset A_1,$ so $A_n\cap (V'+e')\subset A_1\cap (V'+e').$ Using monotonic property of the Hausdorff dimension $\dim_{\mathcal{H}} V'\le\dim_{\mathcal{H}}(A_1\cap (V'+e')).$ Also, $A_1\cap (V'+e')\subset V'+e'$ giving $\dim_{\mathcal{H}}(A_1\cap (V'+e'))\le\dim_{\mathcal{H}}(V'+e'). $ Since the Hausdorff dimension is invariant under translation, we get
%     $$\dim_{\mathcal{H}}(A_1\cap (V'+e'))=\dim_{\mathcal{H}}V'.$$
%     This contradicts the given assumption.
% \end{itemize}
The IFS $\mathcal{I}_n$ satisfies the assumptions of Theorem \ref{th2}, so $\dim_{\mathcal{H}}A_n=\min\{m,\dim_SA_n\},$ where $\sum_{j\in\{1,2,\ldots,N\}^n}c_{ji}^{\dim_SA_n}=1.$ Using the fact $A_n\subseteq A,$ we get $\dim_\mathcal{H}A_n\le\dim_\mathcal{H}A.$ In the first case
$\dim_\mathcal{H}A_n=m,$ then $\dim_\mathcal{H}A=m,$ we are done. In another case, consider $\dim_\mathcal{H}A_n=\dim_SA_n.$ On the contrary, suppose that $\dim_\mathcal{H}A<\dim_SA.$ Then,
\begin{align*}
  &c_i^{-\dim_SA_n}=\sum_{j\in\{1,2,\ldots,N\}^n}c_{j}^{\dim_SA_n} \ge \sum_{j\in\{1,2,\ldots,N\}^n}c_{j}^{\dim_\mathcal{H}A}=\\&\sum_{j\in\{1,2,\ldots,N\}^n}c_{j}^{(\dim_\mathcal{H}A-\dim_SA)}c_j^{\dim_SA}\ge \sum_{j\in\{1,2,\ldots,N\}^n}c_{\max}^{n(\dim_\mathcal{H}A-\dim_SA)}c_j^{\dim_SA}\\
  &=c_{\max}^{n(\dim_\mathcal{H}A-\dim_SA)}.
\end{align*}
As $n\rightarrow\infty,$ we get a contradiction. Thus, $ \dim_\mathcal{H}A=\dim_SA,$ concluding the proof. 
\end{proof}
\begin{remark}
  It is straightforward to observe that if an IFS satisfies ESC (SSC or OSC or SOSC) at the first level, then all subcollections of the IFS at $n$-th level also satisfy ESC (SSC or OSC or SOSC). But the converse is not true in general. That is, if there exists a subcollection of the IFS of the $n$-th level that satisfies ESC (SSC or OSC or SOSC), then the original IFS does not necessarily satisfy ESC (SSC or OSC or SOSC). As an example, consider an IFS on $\mathbb{R}$ with three mappings as follows:
  $$h_1(x)=\frac{x}{4},\quad h_2(x)=\frac{x}{4}+\frac{9}{16}, \quad h_3(x)=\frac{x}{4}+\frac{3}{4}.$$ It is noted that the IFS does not satisfy SSC, but there exists a subcollection of $2$-nd level IFS satisfying SSC (see \cite{MHOCHMAN,VP1}). In view of this, the previous theorem is a weaker version of the Hochman result \cite[Theorem 1.4]{MHOCHMAN1}.    
\end{remark}

% To our knowledge, we could not find any proof or statement of the following lemma in the literature; however, some partial mentions for Dirac measures can be seen in \cite{Sh1} and also, since convolution is the smoothing operation, we believe $\dim_{L^q}(\mu*\nu)\ge\dim_{L^q}(\mu).$ So, it is crucial to include the complete proof with all details for the reader's convenience.
The upcoming lemmas are elementary observations on the behaviour of the $L^q$ dimension, and their proofs are accommodated for young readers. It is remarkable here that Feng et al. \cite[Lemma 2.2(ii)]{Feng} proved one-sided inequality for general Borel probability measures by defining $L^q$ dimension in terms of integration.
\begin{lemma}\label{convothm}
    Let $\mathcal{S}(\mathbb{R}^m)=\big\{\sum_{j=1}^nc_j\delta_{x_j}: x_j\in\mathbb{R}^m,c_j>0,\sum_{j=1}^nc_j=1,n\in\mathbb{N}\big\}$ and $\mu$ be a Borel probability measure with finite support. Then, for a fixed $\omega\in\mathcal{S}(\mathbb{R}^m),$  $$\dim_{L^q}(\mu*\omega)=\dim_{L^q}(\mu),$$
    where $\mu*\omega$ is the convolution of $\mu$ and $\omega.$
\end{lemma}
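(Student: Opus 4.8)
The plan is to show the two inequalities $\dim_{L^q}(\mu*\omega)\le\dim_{L^q}(\mu)$ and $\dim_{L^q}(\mu*\omega)\ge\dim_{L^q}(\mu)$ separately, equivalently the corresponding inequalities for the $L^q$ spectrum $\tau(\cdot,q)$ (recall $\dim_{L^q}=\tau/(q-1)$ and $q>1$, so the direction is preserved). Write $\omega=\sum_{j=1}^n c_j\delta_{x_j}$ with $c_j>0$, $\sum_j c_j=1$. The key structural observation is that convolution with a finite atomic measure is just a finite convex combination of translates: $\mu*\omega=\sum_{j=1}^n c_j\,(\mu*\delta_{x_j})$, and $\mu*\delta_{x_j}$ is the translate of $\mu$ by $x_j$. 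Translation does not change the $L^q$ spectrum up to a bounded additive error in the exponent (shifting by $x_j$ moves mass between at most a bounded number of neighbouring dyadic cells at each scale $k$, independently of $k$), so each $\mu*\delta_{x_j}$ has the same $L^q$ dimension as $\mu$. The whole argument is then a matter of controlling how the $\ell^q$-type sums $\sum_{I\in D_k^m}(\mu*\omega)(I)^q$ compare with $\sum_{I\in D_k^m}\mu(I)^q$.

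For the lower bound (the "convolution does not decrease dimension / smoothing" direction), I would use that for fixed $j$, $(\mu*\omega)(I)\ge c_j\,(\mu*\delta_{x_j})(I)$ for every dyadic cube $I$, hence $\sum_I (\mu*\omega)(I)^q \ge c_j^q \sum_I (\mu*\delta_{x_j})(I)^q$. Taking $-\log$, dividing by $k$, and letting $k\to\infty$ kills the constant $c_j^q$ and gives $\tau(\mu*\omega,q)\le \tau(\mu*\delta_{x_j},q)=\tau(\mu,q)$; since this holds in the "$\ge$" form for sums it yields $\tau(\mu*\omega,q)\le\tau(\mu,q)$, i.e. $\dim_{L^q}(\mu*\omega)\le\dim_{L^q}(\mu)$ — wait, I must be careful with the sign: a larger sum $\sum_I(\cdot)(I)^q$ gives a smaller $-\log$, hence a smaller $\tau$; so $\mu*\omega$ having larger sums than $c_j^q$ times those of a translate of $\mu$ forces $\tau(\mu*\omega,q)\le\tau(\mu,q)$, and since the $L^q$ dimension is $\min\{m,\tau/(q-1)\}$-type and convolution cannot exceed the ambient dimension, the content is really the reverse inequality. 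Let me restate cleanly: the easy bound is $\dim_{L^q}(\mu*\omega)\ge\dim_{L^q}(\mu)$, which follows exactly as above because $(\mu*\omega)(I)^q\ge c_j^q(\mu*\delta_{x_j})(I)^q$ makes the defining sums for $\mu*\omega$ at least a constant multiple of those for a translate of $\mu$, and the constant disappears in the limit.

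For the remaining inequality $\dim_{L^q}(\mu*\omega)\le\dim_{L^q}(\mu)$ I would expand $(\mu*\omega)(I)=\sum_{j=1}^n c_j\,\mu(I-x_j)$ and apply the finite-sum $\ell^q$ estimate $\big(\sum_{j=1}^n c_j t_j\big)^q\le n^{q-1}\sum_{j=1}^n c_j^q t_j^q$ (convexity / power-mean inequality, valid since $q>1$) with $t_j=\mu(I-x_j)$. Summing over $I\in D_k^m$ and interchanging the finite sums, $\sum_I(\mu*\omega)(I)^q\le n^{q-1}\sum_{j=1}^n c_j^q\sum_I\mu(I-x_j)^q$. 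Now $\sum_{I\in D_k^m}\mu(I-x_j)^q$ differs from $\sum_{I\in D_k^m}\mu(I)^q$ only because $I-x_j$ is a translated (non-dyadic) cube: by a standard splitting, each translated cube meets at most $2^m$ dyadic cells of level $k$, so by the elementary inequality $(a_1+\cdots+a_{2^m})^q\le 2^{m(q-1)}(a_1^q+\cdots+a_{2^m}^q)$ one gets $\sum_I\mu(I-x_j)^q\le 2^{m(q-1)}\sum_I\mu(I)^q$ up to a further fixed constant. Combining, $\sum_I(\mu*\omega)(I)^q\le C\sum_I\mu(I)^q$ with $C$ independent of $k$; taking $-\log$, dividing by $k$, and passing to $\liminf$ eliminates $C$ and yields $\tau(\mu*\omega,q)\ge\tau(\mu,q)$, hence $\dim_{L^q}(\mu*\omega)\ge\dim_{L^q}(\mu)$. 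Oops — again the sign: smaller sums give larger $\tau$, so $\sum_I(\mu*\omega)(I)^q\le C\sum_I\mu(I)^q$ gives $-\log\sum_I(\mu*\omega)(I)^q\ge -\log C -\log\sum_I\mu(I)^q$, hence $\tau(\mu*\omega,q)\ge\tau(\mu,q)$, i.e. $\dim_{L^q}(\mu*\omega)\ge\dim_{L^q}(\mu)$. Paired with the opposite inequality from the previous paragraph — which by the same sign bookkeeping is $\dim_{L^q}(\mu*\omega)\le\dim_{L^q}(\mu)$ — we conclude equality.

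I expect the only genuinely delicate point to be the bookkeeping around translated dyadic cubes: one must verify that shifting by a fixed vector $x_j$ changes the level-$k$ $\ell^q$ sum $\sum_{I\in D_k^m}\mu(I)^q$ by only a multiplicative constant uniform in $k$, which is where the "each translated cube hits boundedly many dyadic cubes" count and the finite $\ell^q$-superadditivity inequality enter; everything else is convexity of $t\mapsto t^q$ and the limit killing additive constants after dividing by $k$. I would also remark at the end that the same argument shows, more generally, $\dim_{L^q}(\mu*\omega)=\dim_{L^q}(\mu)$ whenever $\omega$ is \emph{any} compactly supported measure that is comparable to a finite atomic measure at dyadic scales, but for the statement as given the finite-support hypothesis on $\omega\in\mathcal{S}(\mathbb{R}^m)$ is exactly what makes both constants $n^{q-1}$ and the translate count finite.
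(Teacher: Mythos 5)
Your proposal is correct and takes essentially the same route as the paper: reduce convolution with $\omega=\sum_j c_j\delta_{x_j}$ to the translates $\mu*\delta_{x_j}$, show that translation changes the level-$k$ dyadic $\ell^q$ sums only by a multiplicative constant (bounded overlap of shifted dyadic cells), and handle the convex combination via a power-mean/H\"older-type inequality in one direction and the trivial pointwise bound $(\mu*\omega)(I)\ge c_j(\mu*\delta_{x_j})(I)$ in the other. The sign mix-ups voiced mid-argument are resolved correctly in your final assembly ($\dim_{L^q}(\mu*\omega)\le\dim_{L^q}(\mu)$ from the first estimate, $\ge$ from the second), and your single-translate lower bound for the ``$\le$'' direction is in fact a slightly cleaner way to close that half than the chain of inequalities the paper uses there.
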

\begin{proof}
We first observe our claim for the Dirac measures. For this, consider $x\in\mathbb{R}$ with $x$ as dyadic end point, i.e., $x=\frac{j_0}{2^k}.$ Then,
\begin{align*}
    I-x=\Big[\frac{j}{2^k}-\frac{j_0}{2^k},\frac{j+1}{2^k}-\frac{j_0}{2^k}\Big)=\Big[\frac{j-j_0}{2^k},\frac{j+1-j_0}{2^k}\Big).
\end{align*}
In this case, $I-x$ is another subinterval in $D_k.$ So, $\dim_{L^q}(\mu*\delta_x)=\dim_{L^q}(\mu).$
In second case observe that for any $x\in\mathbb{R},$ consider
 \begin{align*}
     I-x=\Big[\frac{j}{2^k}-x,\frac{j+1}{2^k}-x\Big).
     % \subseteq\Big[\frac{j}{2^k}-x,\frac{j}{2^k}\Big)\bigcup\Big[\frac{j}{2^k},\frac{j+1}{2^k}\Big)
 \end{align*}
 Note that the length of $I-x$ is the same as the length of $I,$ equal to $\frac{1}{2^k}.$ So. $I-x$ must be contained in two consecutive intervals (say, $I'$ and $I''$) of $D_k$.
 \begin{align*}
     \sum_{I\in D_k}\Big(\mu*\delta_{x}(I)\Big)^q=\sum_{I\in D_k}\Big(\mu(I-x)\Big)^q\le\sum_{I',I''\in D_k}\Big(\mu(I')+\mu(I'')\Big)^q,
 \end{align*}\\
 where $I-x\subset I'\cup I''.$
 \begin{align*}
     \sum_{I\in D_k}\Big(\mu*\delta_{x}(I)\Big)^q\le\sum_{I',I''\in D_k}2^{q-1}\Big(\mu(I')^q+\mu(I'')^q\Big)\le 2^q\sum_{I\in D_k}\mu(I)^q.
 \end{align*}
 Consequently, 
 \begin{align*}
     -\log\sum_{I\in D_k}\Big(\mu*\delta_{x}(I)\Big)^q\ge -\log \Big(2^q\sum_{I\in D_k}\mu(I)^q\Big)=-\log(2^q)-\log\Big(\sum_{I\in D_k}\mu(I)^q\Big).
 \end{align*}
 Dividing by $k$ and taking the limit infimum, we get
 $$\tau(\mu*\delta_x,q)\ge\tau(\mu,q)~\text{ and }~\dim_{L^q}(\mu*\delta_x)\ge\dim_{L^q}(\mu).$$
 Again, assume $I$ and $J$ are distinct intervals in $D_k.$ Then, for any $x\in\mathbb{R},~ I-x$ and $J-x$ are disjoint because for $z\in (I-x)\cap (J-x)$, it follows $z=y-x$ and $z=y'-x$ for some $y\in I$ and $y'\in J.$ So $x+z\in I\cap J,$ which is a contradiction. Thus, the collection $\{I-x: I\in D_k\}$ contains mutually disjoint intervals. For any $I\in D_k,$ we have
 $$I\subset (I'-x)\cup(I''-x) ~~\text{ for some } I', I'' \in D_k.$$
 Following the same lines as in the previous one, we get $\dim_{L^q}(\mu*\delta_x)\le\dim_{L^q}(\mu).$
 % \begin{align*}
 %     \sum_{I\in D_k}(\mu
 %     (I))^q\le \sum_{I',I''\in D_k}\Big(\mu(I'-x)+\mu(I''-x)\Big)^q\le \sum_{I',I''\in D_k}2^{q-1}\Big[\Big(\mu*\delta_x(I')\Big)^q+\Big(\mu*\delta_x(I'')\Big)^q\Big].
 % \end{align*}
 Let $\omega\in \mathcal{S}(\mathbb{R}^m)$ be of the form $\omega=\sum_{j=1}^nc_j\delta_{x_j},$ for some $x_j\in\mathbb{R}^m, c_j>0$ and $\sum_{j=1}^nc_j=1.$ Then, for $I\in D_k,$ we have
 \begin{align*}
\Big(\Big(\mu*\sum_{j=1}^nc_j\delta_{x_j}\Big)(I)\Big)^q&=\Big(\Big(\sum_{j=1}^nc_j\mu*\delta_{x_j}\Big)(I)\Big)^q\\
&=\Big(\sum_{j=1}^nc_j^{q'}\Big)^\frac{q}{q'}\Big(\sum_{j=1}^n(\mu*\delta_{x_j}(I))^q\Big),
 \end{align*}
 where $q'>1$ such that $\frac{1}{q'}+\frac{1}{q}=1.$ Let $M=(\sum_{j=1}^nc_j^{q'}\Big)^\frac{q}{q'}.$ Thus, 
 \begin{align*}
     \sum_{I\in D_k}\Big(\Big(\mu*\sum_{j=1}^nc_j\delta_{x_j}\Big)(I)\Big)^q&=M\sum_{I\in D_k}\Big(\sum_{j=1}^n(\mu*\delta_{x_j}(I))^q\Big)\\
     &=M\sum_{j=1}^n\Big(\sum_{I\in D_k}(\mu*\delta_{x_j}(I))^q\Big)\\
     &\le 2^qMn\Big(\sum_{I\in D_k}\mu(I)^q\Big).
 \end{align*}
 This gives $\tau(\mu*\sum_{j=1}^nc_j\delta_{x_j},q)\ge\tau(\mu,q).$ Again,
 \small{\begin{align*}
     & \sum_{I\in D_k}\mu
     (I)^q=\sum_{I\in D_k}\sum_{j=1}^nc_j\mu
     (I)^q=\sum_{I\in D_k}c_1\mu
     (I)^q+\cdots+\sum_{I\in D_k}c_n\mu
     (I)^q\le  \\
     &2^q\sum_{I\in D_k}c_1\Big(\mu*\delta_{x_1}(I)\Big)^q+\cdots+2^q\sum_{I\in D_k}c_n\Big(\mu*\delta_{x_n}(I)\Big)^q\le2^q\sum_{I\in D_k}\Big(c_1\mu*\delta_{x_1}(I)\Big)^q\\
     &+\cdots+2^q\sum_{I\in D_k}\Big(c_n\mu*\delta_{x_n}(I)\Big)^q\le2^q\sum_{I\in D_k}\Big(\sum_{j=1}^nc_j\mu*\delta_{x_j}(I)\Big)^q=2^q\sum_{I\in D_k}\Big(\mu*\sum_{j=1}^nc_j\delta_{x_j}(I)\Big)^q,
     % &\le 2^q\sum_{j=1}^n\sum_{I\in D_k}\Big(\mu*\delta_{x_j}(I)\Big)^q\\
     % &= 2^q\Big(\sum_{j=1}^nc_j\Big)\sum_{j=1}^n\sum_{I\in D_k}\Big(\mu*\delta_{x_j}(I)\Big)^q\\
     % &= 2^q\sum_{I\in D_k}\sum_{j=1}^n c_j\Big(\mu*\delta_{x_j}(I)\Big)^q\\
 \end{align*}}
 
 concluding that $\tau(\mu*\sum_{j=1}^nc_j\delta_{x_j},q)\le\tau(\mu,q).$ This completes the proof.
 % Since $D_k$ form the partition on $\mathbb{R},$ so $\frac{j}{2^k}-x$ belong to some $[\frac{j'}{2^k},\frac{j''}{2^{k}})$
% \begin{align*}
%     \tau(\mu*\delta_{x})&=\liminf_{k\rightarrow\infty}\frac{-\log\sum_{I\in D_k}\Big((\mu*\delta_{x})(I)\Big)^q}{k}\\
%     &=\liminf_{k\rightarrow\infty}\frac{-\log\sum_{I\in D_k}\Big((\textcolor{blue}{\mu(I-x)}\Big)^q}{k}\\
% \end{align*}
 % $L^q$ spectrum of $\mu*\omega$ is given as
    % \begin{align*}
    %     \tau\Big(\mu*\sum_{j=1}^nc_j\delta_{x_j}\Big)&=\liminf_{k\rightarrow\infty}\frac{-\log\sum_{I\in D_k}\Big(\Big(\mu*\sum_{j=1}^nc_j\delta_{x_j}\Big)(I)\Big)^q}{k}\\
    %     &=\liminf_{k\rightarrow\infty}\frac{-\log\sum_{I\in D_k}\Big(\Big(\sum_{j=1}^nc_j\mu*\delta_{x_j}\Big)(I)\Big)^q}{k}
    % \end{align*}
\end{proof}
\begin{lemma}\label{1179}
    Let $\mu$ be a Borel probability measure on $\mathbb{R}^m$ and $\beta$ be a non-zero real number. Then, $\dim_{L^q}(\mu)=\dim_{L^q}(\nu),$ where $\nu(D)=\mu(\beta D),$ for all Borel set $D.$  
\end{lemma}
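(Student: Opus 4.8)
The plan is to prove the stronger statement that the $L^q$ spectra coincide, $\tau(\mu,q)=\tau(\nu,q)$; dividing by $q-1>0$ then gives $\dim_{L^q}(\mu)=\dim_{L^q}(\nu)$. Observe that $\nu$ is the push-forward of $\mu$ under the dilation $x\mapsto\beta^{-1}x$, and that for each Borel set $E$ one has $\mu(E)=\nu(\beta^{-1}E)$; hence the pair $(\nu,\beta^{-1})$ satisfies exactly the same hypothesis as $(\mu,\beta)$, so by symmetry it suffices to establish the one-sided bound $\tau(\nu,q)\ge\tau(\mu,q)$. The underlying idea is that a dilation distorts the dyadic grid $D_k^m$ only by the bounded multiplicative factor $|\beta|$ at every scale, an effect which disappears after dividing by $k$ and letting $k\to\infty$; in particular there is no need to change the level $k$ (doing so also works, but is messier).

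Fix $k\in\mathbb{N}$. The key elementary fact is that a box in $\mathbb{R}^m$ with sidelengths $r$ meets at most $(\lfloor r2^k\rfloor+2)^m$ cubes of $D_k^m$, irrespective of whether the box is open, closed, or half-open. For $I\in D_k^m$ the set $\beta I$ is a box with sidelengths $|\beta|2^{-k}$, so the collection $\mathcal{J}(I):=\{J\in D_k^m: J\cap\beta I\ne\emptyset\}$ has cardinality at most $N_1:=(\lfloor|\beta|\rfloor+2)^m$, and $\nu(I)=\mu(\beta I)\le\sum_{J\in\mathcal{J}(I)}\mu(J)$. By the power-mean inequality (valid since $q>1$),
$$\nu(I)^q\le N_1^{q-1}\sum_{J\in\mathcal{J}(I)}\mu(J)^q.$$
Summing over $I\in D_k^m$ and interchanging the order of summation, each $J\in D_k^m$ is charged only by those $I$ with $\beta I\cap J\ne\emptyset$, equivalently $I\cap\beta^{-1}J\ne\emptyset$; since $\beta^{-1}J$ is a box with sidelengths $|\beta|^{-1}2^{-k}$, there are at most $N_2:=(\lfloor|\beta|^{-1}\rfloor+2)^m$ such $I$. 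Therefore
$$\sum_{I\in D_k^m}\nu(I)^q\le N_1^{q-1}N_2\sum_{J\in D_k^m}\mu(J)^q,$$
with $N_1,N_2$ independent of $k$.

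Taking $-\log$, dividing by $k$, and passing to the limit infimum — the constant term $k^{-1}\log(N_1^{q-1}N_2)$ tends to $0$, and $\liminf_k(x_k+y_k)\ge\liminf_k x_k+\liminf_k y_k$ — yields $\tau(\nu,q)\ge\tau(\mu,q)$. Applying this inequality to the pair $(\nu,\beta^{-1})$ in place of $(\mu,\beta)$ gives the reverse inequality $\tau(\mu,q)\ge\tau(\nu,q)$, so $\tau(\mu,q)=\tau(\nu,q)$ and hence $\dim_{L^q}(\mu)=\dim_{L^q}(\nu)$. The only point requiring a little care is the combinatorial bookkeeping with the half-open dyadic cubes when $\beta<0$ (where $\beta I$ is a reflected box), but this merely affects the boundary and is comfortably absorbed in the slack $+2$ appearing in $N_1$ and $N_2$; there is no essential difficulty.
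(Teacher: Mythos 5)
Your proof is correct, and it takes a genuinely different route from the paper's. The paper works on $\mathbb{R}$ (asserting the extension to $\mathbb{R}^m$ is analogous), splits into cases according to whether $\beta$ is a dyadic rational or not, and compares the level-$k$ sum for $\nu$ with the sum for $\mu$ at a \emph{shifted} level $k\pm s$ obtained by sandwiching $\beta$ between dyadic endpoints $\tfrac{j_0}{2^s}$ and $\tfrac{j_0+1}{2^s}$; each of the two inequalities $\tau(\nu,q)\ge\tau(\mu,q)$ and $\tau(\mu,q)\ge\tau(\nu,q)$ is then proved separately by covering $\beta I$ (resp. $\beta^{-1}I$) with boundedly many dyadic intervals of the shifted level, the level shift being harmless after dividing by $k$. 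You instead stay at the same level $k$, bound the number of cubes of $D_k^m$ met by the box $\beta I$ by $(\lfloor|\beta|\rfloor+2)^m$, use the power-mean inequality $(\sum a_i)^q\le n^{q-1}\sum a_i^q$, control the multiplicity in the interchange of sums by $(\lfloor|\beta|^{-1}\rfloor+2)^m$, and then dispose of the reverse inequality by the clean symmetry observation that $(\nu,\beta^{-1})$ satisfies the same hypotheses with scaled measure $\mu$. What your version buys: it treats all nonzero $\beta$ (including $\beta<0$, which the paper's dyadic-endpoint manipulations implicitly leave aside) and all $m$ uniformly, with explicit constants and no case analysis or level-shifting bookkeeping; what the paper's version offers is essentially the same covering idea expressed through dyadic approximation of $\beta$, at the cost of more cases and a proof written only in dimension one.
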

\begin{proof}
% %%%%%%%%%%%%%%%%%%%%%%%%
The result will be demonstrated in $\mathbb{R}$ and following a similar approach, we extend it to $\mathbb{R}^m.$
    Firstly, suppose $\beta$ as a dyadic end point, that is, $\beta=\frac{j_0}{2^s},$ with $s>0.$ Then, $\beta I=\Big[\frac{jj_0}{2^{k+s}},\frac{jj_0+j_0}{2^{k+s}}\Big),$ and its length is equal to $\frac{j_0}{2^{k+s}}.$ Notice that $$\beta I=\Big[\frac{jj_0}{2^{k+s}},\frac{jj_0+1}{2^{k+s}}\Big)\cup\Big[\frac{jj_0+1}{2^{k+s}},\frac{jj_0+2}{2^{k+s}}\Big)\cup\cdots \cup \Big[\frac{jj_0+j_0-1}{2^{k+s}},\frac{jj_0+j_0}{2^{k+s}}\Big).$$  
Consider $I_1=\Big[\frac{jj_0}{2^{k+s}},\frac{jj_0+1}{2^{k+s}}\Big),\ldots, I_{j_0}=\Big[\frac{jj_0+j_0-1}{2^{k+s}},\frac{jj_0+j_0}{2^{k+s}}\Big),$ so $\mu(\beta I)=\mu(I_1)+\cdots+\mu(I_{j_0}).$ Thus, 
\begin{align*}
    \sum_{I\in D_k}\nu(I)^q&=\sum_{I\in D_k}\mu(\beta I)^q=\sum_{I_1,\ldots,I_{j_0}\in D_{k+s}}\Big(\mu(I_1)+\cdots+\mu(I_{j_0})\Big)^q\\&\le C\Big\{\sum_{I_1\in D_{k+s}}\mu(I_1)^q+\cdots+\sum_{I_{j_0}\in D_{k+s}}\mu(I_{j_0})^q\Big\}=j_0C\sum_{I\in D_{k+s}}\mu(I)^q,
\end{align*}
where $C$ is a suitable constant. It follows that $$\frac{-\log\sum_{I\in D_k}\nu(I)^q}{k}\ge \frac{-\log\Big(j_02^{q-1}\sum_{I\in D_{k+s}}\mu(I)^q\Big)}{k+s}.$$ Applying $\liminf_{k\rightarrow\infty}$ on both sides, we get $\dim_{L^q}(\nu)\ge\dim_{L^q}(\mu).$ For reverse inequality, we have $$I=\Big[\frac{j}{2^k},\frac{j+1}{2^k}\Big)=\Big[\frac{\beta j2^s}{j_02^k},\frac{\beta(j+1)2^s}{j_02^k}\Big)=\beta\Big[\frac{ j}{j_02^{k-s}},\frac{(j+1)}{j_02^{k-s}}\Big).$$ 
Suppose that the interval $\Big[\frac{ j}{j_02^{k-s}},\frac{(j+1)}{j_02^{k-s}}\Big)$
 is contained in intervals $I_1,I_2,\ldots,I_l$ dyadic intervals of level $k-s.$ Then,
 $$I\subset \beta I_1\cup\beta I_2\cup\ldots\cup\beta I_l,$$
 and $\mu(I)\le\mu(\beta I_1)+\mu(\beta I_2)+\cdots+\mu(\beta I_l)=\nu(I_1)+\nu(I_2)+\cdots+\nu(I_l).$ Subsequently,
 \begin{align*}
 &\sum_{I\in D_k}\mu(I)^q\le\sum_{I_1,I_2,\ldots,I_l\in D_{k-s}}\Big(\nu(I_1)+\nu(I_2)+\cdots+\nu(I_l)\Big)^q\le C\sum_{I\in D_{k-s}}\nu(I)^q\\
 &\implies \hspace{1cm}\frac{-\log\sum_{I\in D_k}\mu(I)^q}{k} \ge\frac{-\log\Big( C\sum_{I\in D_{k-s}}\nu(I)^q\Big)}{k-s},
 \end{align*}
 where $C$ is a suitable constant. Applying $\liminf_{k\rightarrow\infty}$ on both sides, we get $\dim_{L^q}(\mu)\ge\dim_{L^q}(\nu).$ 
 In another case, suppose that $\beta$ is not a dyadic endpoint. Then, for some $s>0,$ we have $$\frac{j_0}{2^s}<\beta<\frac{j_0+1}{2^s}.$$
 This gives $$\beta I=\beta\Big[\frac{j}{2^{k}},\frac{j+1}{2^{k}}\Big)\subset\Big[\frac{jj_0}{2^{k+s}},\frac{jj_0+j_0}{2^{k+s}}\Big)\cup\Big[\frac{jj_0+j_0}{2^{k+s}},\frac{jj_0+j}{2^{k+s}}\Big)\cup\Big[\frac{jj_0+j}{2^{k+s}},\frac{jj_0+j+j_0+1}{2^{k+s}}\Big),$$
that is, $\beta I$ is contained in the finite number of dyadic intervals of level $k+s.$ Following the same lines, as done in the first case, we get $\dim_{L^q}(\nu)\ge \dim_{L^q}(\mu).$ Further, 
$$I=\Big[\frac{j}{2^k},\frac{j+1}{2^k}\Big)=\beta\Big[\frac{j}{\beta2^k},\frac{j+1}{\beta2^k}\Big).$$
Since $\frac{2^s}{j_0+1}<\frac{1}{\beta}<\frac{2^s}{j_0},$ we have $$\Big[\frac{j}{\beta2^k},\frac{j+1}{\beta2^k}\Big)\subset\Big[\frac{j}{(j_0+1)2^{k-s}},\frac{j+1}{(j_0+1)2^{k-s}}\Big)\cup\Big[\frac{j+1}{(j_0+1)2^{k-s}},\frac{j}{j_02^{k-s}}\Big)\cup\Big[\frac{j}{j_02^{k-s}},\frac{j+1}{j_02^{k-s}}\Big).$$
This shows that the interval $\Big[\frac{j}{\beta2^k},\frac{j+1}{\beta2^k}\Big)$ is contained in some finite number of dyadic intervals of level $k-s,$ suppose $I_1,I_2,\ldots,I_{l_*}.$ Consequently, $$I\subset \beta I_1\cup\beta I_2\cup\ldots\cup \beta I_{l_*}.$$ Following the similar approach, we get $\dim_{L^q}(\mu)\ge\dim_{L^q}(\nu).$ This completes the proof.

\end{proof}
The upcoming theorems interestingly explore the approximation aspects of a Borel probability measure with a sequence of Borel probability measures  preserving
\begin{itemize}
    \item $L^q$ dimension.
    \item Rajchman measures.
    \item $L^q$ dimension and Rajchman.
    \item $L^q$ dimension and Fourier transform with power Fourier decay.
\end{itemize}
\noindent Readers obviously think that, why we are interested in highlighting and studying some specific dense subsets of the space of Borel probability measures concerning the $L^q$ dimension and Rajchman or power Fourier decay. The Rajchman character of a measure essentially provides information about local regularity and is closely connected to the dimension theory. The Riemann-Lebesgue lemma says that a measure $\eta\in\mathcal{P}(\mathbb{R}^m)$ is Rajchman whenever it is absolutely continuous with respect to the Lebesgue measure; refer to \cite{AR3}, and the references cited therein. Thus, if $\eta$ is not Rajchman, then it is singular. As an example, the Bernoulli convolutions, except for the particular set of parameters, satisfy the Rajchman property. If for some constant $C,$ we have $|\hat{\eta}(x)|\le C|x|^{-\frac{t}{2}},$ for all $x\in\mathbb{R}^m,$ then any set which supports $\eta $ has the Hausdorff dimension atleast $t.$ In fractal geometry, the classification or identification of a class of measures or IFS satisfying specific properties is always of great interest to researchers. Recently, B\'ar\'any and K\"aenm\"aki \cite{BB1} showed that self-similar sets on $\mathbb{R}$ which neither satisfy ESC nor have exact overlaps are uncountable. Olsen \cite{O1} constructed a $G_\delta$ dense set in the space of the Borel probability measure on a compact subset of $\mathbb{R}^m$ with upper $L^q$ dimension equal to zero with respect to metric $d_L$. Recent researches clearly indicate that we want to explore the space $\mathcal{P}(\mathbb{R}^m) $ and are curious to know whether almost (or typically) all Borel probability measures are Rajchman, or all self-affine measures are Rajchman, and so on. Rapaport \cite{AR3} commented on the Rajchman property of self-similar measures on $\mathbb{R}^m.$ So, going through all the recent works, we thought, can we get some dense subsets of $\mathcal{P}(\mathbb{R}^m)$ preserving $L^q$ dimension or Rajchman property? Our upcoming theorems answer these queries interestingly. Now, the future remarks what we can comment on all these upcoming dense sets to be $G_\delta$ dense.
\begin{theorem}\label{densethm112}
    For each $\beta \in [0,m]$, the set $\mathcal{S}_\beta:=\{\mu\in\mathcal{P}(\mathbb{R}^m): \dim_{L^q}(\mu)= \beta\}$ is dense in $\mathcal{P}(\mathbb{R}^m)$.
\end{theorem}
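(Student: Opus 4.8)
The plan is to mimic the density arguments used earlier in the paper for the sets $A_\alpha$, $H_\alpha$, and $\mathcal{D}$ in $\mathcal{K}(\mathbb{R}^m)$, but now working in $\mathcal{P}(\mathbb{R}^m)$ with the metric $d_L$. Fix $\beta\in[0,m]$, a target measure $\mu\in\mathcal{P}(\mathbb{R}^m)$, and $\varepsilon>0$. First I would approximate $\mu$ in $d_L$ by a finitely supported atomic measure $\omega_\varepsilon=\sum_{j=1}^n c_j\delta_{x_j}\in\mathcal{S}(\mathbb{R}^m)$ with $d_L(\mu,\omega_\varepsilon)<\varepsilon/2$; this is the standard fact that compactly supported measures are weakly approximated by finite convex combinations of Dirac masses, and weak convergence is metrized by $d_L$ on sets of measures with uniformly bounded support (one partitions a neighbourhood of $\mathrm{supp}\,\mu$ into small cells and places an atom of the correct mass in each). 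By Example \ref{eg2}, $\dim_{L^q}(\omega_\varepsilon)=0$, so the case $\beta=0$ is already finished.

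For $\beta\in(0,m]$ the idea is to perturb $\omega_\varepsilon$ by a small, localized copy of a model measure $\nu_\beta$ of $L^q$ dimension exactly $\beta$, placed in a tiny ball near one of the atoms $x_1$, so that the perturbation is negligible in $d_L$ but dominates the $L^q$ dimension. Concretely, take a self-similar measure (e.g. a biased Bernoulli/Cantor-type construction in one coordinate, embedded in $\mathbb{R}^m$ by the product with point masses as in the proof that $\mathcal{D}$ is dense) whose $L^q$ dimension equals $\beta$; for $\beta=m$ one may take normalized Lebesgue measure on a small cube, for which Example \ref{eg2} gives dimension $m$ after the obvious product computation. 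Scale and translate it by Lemma \ref{1179} so its support lies in $B(x_1,\varepsilon')$ for a small $\varepsilon'$, call the result $\nu_\beta'$; Lemma \ref{1179} guarantees $\dim_{L^q}(\nu_\beta')=\beta$. Now set
\[
\mu_\varepsilon \;=\; (1-t)\,\omega_\varepsilon \;+\; t\,\nu_\beta'
\]
for a small weight $t\in(0,1)$. Since $\omega_\varepsilon$ is atomic with $\dim_{L^q}=0$ and a finite convex combination of measures has $L^q$ spectrum controlled by the minimum of the spectra (the same Hölder/splitting estimates as in Lemma \ref{convothm}: $\sum_I(a\mu_1(I)+b\mu_2(I))^q$ is comparable to $\sum_I\mu_1(I)^q+\sum_I\mu_2(I)^q$ up to constants, hence $\tau(a\mu_1+b\mu_2,q)=\min\{\tau(\mu_1,q),\tau(\mu_2,q)\}$), we get $\dim_{L^q}(\mu_\varepsilon)=\min\{0\cdot\text{stuff},\beta\}$—more carefully, $\tau(\mu_\varepsilon,q)=\min\{\tau(\omega_\varepsilon,q),\tau(\nu_\beta',q)\}=\min\{0,(q-1)\beta\}=0$ if $q>1$... so I must instead place the atomic part itself as a dimension-$\beta$ obstruction is wrong; the correct move is the reverse: a finite atomic measure has $\tau=0$, which is the \emph{smallest} possible, so any convex combination with it also has $\tau=0$. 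Hence I would not combine with $\omega_\varepsilon$ at all for the dimension, but rather note $d_L(\mu,\nu_\beta')$ need not be small. The fix: keep $\mu_\varepsilon=(1-t)\omega_\varepsilon+t\nu_\beta'$ but observe $d_L(\omega_\varepsilon,\mu_\varepsilon)\le t\cdot d_L(\omega_\varepsilon,\nu_\beta')\le t\cdot(\mathrm{diam}+\dots)<\varepsilon/2$ for $t$ small, so $d_L(\mu,\mu_\varepsilon)<\varepsilon$; and for the dimension we need $\dim_{L^q}(\mu_\varepsilon)=\beta$, which forces us to check that adjoining the atomic part does \emph{not} drop the spectrum. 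But adjoining atoms can only increase $\sum_I\mu(I)^q$ (atoms contribute a term bounded below by a constant), pushing $\tau$ down to $0$. So genuinely the atomic piece destroys the dimension.

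The resolution—and the step I expect to be the main obstacle—is therefore to avoid atomic approximants entirely: approximate $\mu$ in $d_L$ not by $\omega_\varepsilon$ but by a \emph{nonatomic} measure of $L^q$ dimension $\beta$ directly. For $\beta=m$, convolve $\mu$ with the uniform measure on a cube of side $\rho\to0$; the convolution $\mu_\rho$ satisfies $d_L(\mu,\mu_\rho)\le C\rho\to 0$ and, being absolutely continuous with bounded density on a bounded set, has $\dim_{L^q}(\mu_\rho)=m$. For $\beta<m$, instead convolve $\mu$ with a scaled copy (via Lemma \ref{1179}) of a compactly supported self-similar measure $\nu_\beta$ with $\dim_{L^q}(\nu_\beta)=\beta$; since convolution is smoothing, one expects $\dim_{L^q}(\mu*\nu_{\beta}^{(\rho)})=\beta$ provided $\mu$ does not itself raise it—here one reduces to the atomic case by first $d_L$-approximating $\mu$ by an atomic $\omega$ (allowed \emph{inside} $d_L$ since $d_L(\mu*\eta,\omega*\eta)\le d_L(\mu,\omega)$), and then invoking Lemma \ref{convothm} to conclude $\dim_{L^q}(\omega*\nu_\beta^{(\rho)})=\dim_{L^q}(\nu_\beta^{(\rho)})=\beta$ after also rescaling the support of $\nu_\beta^{(\rho)}$ to be small via Lemma \ref{1179}. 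Thus the final approximant is $\omega*\nu_\beta^{(\rho)}$ with $\omega$ a fine atomic approximation of $\mu$ and $\rho$ small: it is $d_L$-close to $\mu$ and has $L^q$ dimension exactly $\beta$, which establishes density of $\mathcal{S}_\beta$. The delicate point to verify carefully is the $d_L$-estimate $d_L(\mu,\omega*\nu_\beta^{(\rho)})\le d_L(\mu,\omega)+d_L(\omega,\omega*\nu_\beta^{(\rho)})$ together with $d_L(\eta,\eta*\sigma)\le \mathrm{diam}(\mathrm{supp}\,\sigma)$, both of which follow from the definition of $d_L$ as a supremum over $1$-Lipschitz test functions, but which should be stated explicitly.
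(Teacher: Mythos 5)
After discarding the convex-combination detour, your final construction---approximate $\mu$ in $d_L$ by a finite atomic measure, convolve it with a copy of a dimension-$\beta$ measure rescaled via Lemma \ref{1179} to have small support near the origin, apply Lemma \ref{convothm} to keep the $L^q$ dimension equal to $\beta$, and control the distance by testing against $1$-Lipschitz functions---is exactly the paper's proof, so the proposal is correct and takes essentially the same approach. The only point to tidy is your bound $d_L(\eta,\eta*\sigma)\le \mathrm{diam}(\mathrm{supp}\,\sigma)$, which should be $\sup_{y\in\mathrm{supp}\,\sigma}\|y\|_2$ (harmless here, since the rescaling places the support near the origin, just as in the paper).
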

\begin{proof}
 Let $ \mu \in \mathcal{P}(\mathbb{R}^m)$ and $\epsilon>0.$ Using the density of $\mathcal{S}(\mathbb{R}^m)$ in $\mathcal{P}(\mathbb{R}^m)$ (see \cite{Bill}), there exists $\nu=\sum_{j=1}^nc_j\delta_{x_j}$ in $\mathcal{S}(\mathbb{R}^m)$ such that $$ d_L(\mu, \nu) < \frac{\epsilon}{2}.$$ Further, we consider a non-zero compactly supported measure $ \eta \in \mathcal{S}_{\beta}$ with its support $K \subset \mathbb{R}^m.$ Let $\eta'= \nu * \eta_1,$ where $\eta_1(D):=\eta(\frac{8\sqrt{m}diam(K)D}{\epsilon})$ and $diam(K):=\sup\{\|x-y\|_2: x,y \in K\}.$ Note that Lemmas \ref{convothm} and \ref{1179} implies that $\dim_{L^q}(\eta')=\dim_{L^q}(\eta_1)=\dim_{L^q}(\eta)=\beta,$ that is, $\eta' \in \mathcal{S}_{\beta}.$ In view of the properties of convolution mentioned in Section \ref{Sec2}, we get
      \begin{align*}
       d_L(\nu,\eta')& =\sup_{f\in \text{Lip}_1} \Big\{\Big|\int_{\mathbb{R}^m} f d\nu -\int_{\mathbb{R}^m} fd\eta' \Big|\Big\} \\
      & =\sup_{f\in \text{Lip}_1} \Big\{\Big|\int_{\mathbb{R}^m} f d\Big(\sum_{j=1}^nc_j\delta_{x_j}\Big) -\int_{\mathbb{R}^m} fd(\nu*\eta_1) \Big|\Big\}.
      \end{align*}
Notice that 
\small{\begin{align*}
  \int_{\mathbb{R}^m} fd(\nu*\eta_1)&=\int_{\mathbb{R}^m}\int_{\mathbb{R}^m}f(x+y)d\nu(x)d\eta_1(y)=\int_{\mathbb{R}^m}\int_{\mathbb{R}^m}f(x+y)d(\sum_{j=1}^nc_j\delta_{x_j})(x)d\eta_1(y)\\&=\sum_{j=1}^nc_j\int_{\mathbb{R}^m}f(x_j+y)d\eta_1(y).  
\end{align*}}
Thus,
      \begin{align*}
      d_L(\nu,\eta') & = \sup_{f\in \text{Lip}_1} \Big\{\Big| \sum_{j=1}^n c_j f(x_j) -\sum_{j=1}^n c_j \int_{\mathbb{R}^m} f(y+x_j)d\eta_1(y)\Big|\Big\} \hspace{.4cm} \\
       & = \sup_{f\in \text{Lip}_1} \Big\{\Big| \sum_{j=1}^n c_j \int_{\mathbb{R}^m} f(x_j)d\eta_1(y) -\sum_{j=1}^n c_j \int_{\mathbb{R}^m} f(y+x_j)d\eta_1(y)\Big|\Big\}\\
       & \le \sup_{f\in \text{Lip}_1} \Big\{ \sum_{j=1}^n c_j  \int_{\mathbb{R}^m} |f(x_j)-f(y+x_j)|d\eta_1(y) \Big\}\\ 
       & \le  \sup_{f\in \text{Lip}_1} \Big\{  \sum_{j=1}^n c_j  \int_{\mathbb{R}^m} \text{Lip}(f)\|y\|_2d\eta_1(y)\Big\}.
       \end{align*}
       We claim that the support of $\eta_1$ is a subset of $\big[\frac{-\epsilon}{4\sqrt{m}},\frac{\epsilon}{4\sqrt{m}}\big]^m.$ Let $C\subset \mathbb{R}^m\setminus\big[\frac{-\epsilon}{4\sqrt{m}},\frac{\epsilon}{4\sqrt{m}}\big]^m.$ If $x\in C,$ then $\|x\|_2>\frac{\epsilon}{4\sqrt{m}}$ and $\frac{8\sqrt{m}diam(K)\|x\|_2}{\epsilon}>2diam(K).$ So, $\eta_1(C)=\eta(\frac{8\sqrt{m}diam(K)C}{\epsilon})=0.$ The previous estimation is reduced to
       \begin{align*}
       d_L(\nu,\eta') &  \le  \sup_{f\in \text{Lip}_1} \Big\{  \int_{\big[\frac{-\epsilon}{4\sqrt{m}},\frac{\epsilon}{4\sqrt{m}}\big]^m} \text{Lip}(f)\|y\|_2d\eta_1(y)\Big\}\\& \le\sup_{f\in \text{Lip}_1} \Big\{   \int_{\big[\frac{-\epsilon}{4\sqrt{m}},\frac{\epsilon}{4\sqrt{m}}\big]^m} \text{Lip}(f)\frac{\epsilon}{2} d\eta_1(y) \Big\}
          \le \frac{\epsilon}{2}.
      \end{align*} 
      Using the above estimate, we immediately get $$ d_L(\mu,\eta') \le  d_L(\mu,\nu) + d_L(\nu,\eta')< \frac{\epsilon}{2} + \frac{\epsilon}{2}  =\epsilon,$$ 
              showing that $\mathcal{S}_{\beta}$ is dense. 

\end{proof}

% For singular measures, determining which ones are Rajchman is a subtle question with a long history.

% Every probability measure has a bounded Fourier transform. In view of these facts, we, as a prelude, included our next result without its proof.

\begin{proposition}\label{convothm112}
\noindent
\begin{itemize}
    \item[(i)] Let $\mu,\nu\in\mathcal{P}(\mathbb{R}^m).$ If $\mu $ is Rajchman (with power Fourier decay), then $\mu*\nu$ is also a Rajchman (with power Fourier decay).
    \item[(ii)] Let $\mu\in\mathcal{P}(\mathbb{R}^m)$ be any Rajchman measure. Let $\beta>0$ be any real number and $\eta(D)= \mu(\beta D)$ for each Borel set $D$. Then, 
    $$\lim_{|\zeta| \to \infty}|\hat{\eta}(\zeta)|= 0.$$
\end{itemize}
\end{proposition}
\begin{proof}
It is well-known (and obvious) that the Fourier transform of a finite measure is bounded. As $\nu\in\mathcal{P}(\mathbb{R}^m),$ there exists $K\in \mathbb{R}$ such that $|\hat{\nu}(\zeta)|\le K,$ for all $\zeta\in\mathbb{R}^m.$ Consider
  \begin{align*}
      \lim_{|\zeta| \to \infty}|\widehat{\mu*\nu}(\zeta)|= \lim_{|\zeta| \to \infty}|\hat{\mu}(\zeta)\hat{\nu}(\zeta)|=\lim_{|\zeta| \to \infty}|\hat{\mu}(\zeta)||\hat{\nu}(\zeta)|\le K\lim_{|\zeta| \to \infty}|\hat{\mu}(\zeta)|=0.
  \end{align*}
  Again, note that 
  \begin{align*}
     \lim_{|\zeta| \to \infty}|\hat{\eta}(\zeta)|= \lim_{|\zeta| \to \infty}\Big|\int e^{-2\pi i\zeta.x}d\eta(x)\Big|=\lim_{|\zeta| \to \infty}\Big|\int e^{-2\pi i\zeta.\beta x}d\mu(x)\Big|=0,
  \end{align*}
  as $\mu$ is Rajchman, concluding the assertion.
  % The sequence $<|\int e^{-2\pi i\zeta.x}d\mu(x)\Big|>$ tends to $0$ as $|\zeta| \to \infty.$ Then its subsequence $<|\int e^{-2\pi i\zeta.\beta x}d\mu(x)\Big|>$ is also convergent and tends to $0$ as $|\zeta| \to \infty.$
\end{proof}
% \begin{proposition}\label{convothm112}
%     If the Fourier transform of a measure $\nu$ is bounded and $\mu $ is Rajchman (with power Fourier decay), then $\mu*\nu$ is also a Rajchman (with power Fourier decay).
%  For any Rajchman measure $\mu$ and real number $\beta>0$, we have $\lim_{|\zeta| \to \infty}|\hat{\nu}(\zeta)|= 0$, where $\nu(D)= \mu(\beta D)$ for each Borel set $D$.
% \end{proposition}
% \begin{proof}
%  Since $\nu$ is bounded, $\nu(\mathbb{R}^m)<\infty.$ Consider
%   \begin{align*}
%       \lim_{|\zeta| \to \infty}|\widehat{\mu*\nu}(\zeta)|= \lim_{|\zeta| \to \infty}|\hat{\mu}(\zeta)\hat{\nu}(\zeta)|=\lim_{|\zeta| \to \infty}|\hat{\mu}(\zeta)||\hat{\nu}(\zeta)|=K\lim_{|\zeta| \to \infty}|\hat{\mu}(\zeta)|=0.
%   \end{align*}
%   Again, note that 
%   \begin{align*}
%      \lim_{|\zeta| \to \infty}|\hat{\nu}(\zeta)|= \lim_{|\zeta| \to \infty}\Big|\int e^{-2\pi i\zeta.x}d\nu(x)\Big|=\lim_{|\zeta| \to \infty}\Big|\int e^{-2\pi i\zeta.\beta x}d\mu(x)\Big|=0,
%   \end{align*}
%   as $\mu$ is Rajchman, concluding the assertion.
%   % The sequence $<|\int e^{-2\pi i\zeta.x}d\mu(x)\Big|>$ tends to $0$ as $|\zeta| \to \infty.$ Then its subsequence $<|\int e^{-2\pi i\zeta.\beta x}d\mu(x)\Big|>$ is also convergent and tends to $0$ as $|\zeta| \to \infty.$
% \end{proof}
\begin{theorem}
    The set $\mathcal{S}_R:=\{\mu\in\mathcal{P}(\mathbb{R}^m): \lim_{|\zeta| \to \infty}|\hat{\mu}(\zeta)|= 0\}$ is dense in $\mathcal{P}(\mathbb{R}^m)$. Moreover, for each $\beta \in [0,m]$, the sets $\mathcal{S}_R^{\beta}:=\{\mu\in\mathcal{P}(\mathbb{R}^m): \dim_{L^q}(\mu)=\beta \text{ and } \lim_{|\zeta| \to \infty}|\hat{\mu}(\zeta)|= 0\}$ and $\mathcal{S}_F^{\beta}:=\{\mu\in\mathcal{P}(\mathbb{R}^m): \dim_{L^q}(\mu)=\beta \text{ and } \hat{\mu}(\zeta) \text{ has power Fourier decay} \}$ are dense in $\mathcal{P}(\mathbb{R}^m)$.
\end{theorem}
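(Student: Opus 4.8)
The plan is to reduce everything to the construction of suitable \emph{witness measures} and then re-run the argument of Theorem \ref{densethm112} with an extra layer of Fourier bookkeeping supplied by Proposition \ref{convothm112}. Since $\mathcal S_F^m\subseteq\mathcal S_R^m\subseteq\mathcal S_R$, the density of $\mathcal S_R$ will follow once the density of $\mathcal S_R^\beta$ and of $\mathcal S_F^\beta$ is established for every $\beta\in[0,m]$; so fix $\beta\in[0,m]$ and assume we have a non-zero, compactly supported $\lambda\in\mathcal S_\beta$ that is in addition Rajchman (for $\mathcal S_R^\beta$), respectively has power Fourier decay (for $\mathcal S_F^\beta$). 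Given $\mu\in\mathcal P(\mathbb R^m)$ and $\epsilon>0$, pick $\nu=\sum_{j=1}^n c_j\delta_{x_j}\in\mathcal S(\mathbb R^m)$ with $d_L(\mu,\nu)<\epsilon/2$ (density of $\mathcal S(\mathbb R^m)$, see \cite{Bill}), put $K=\text{supp}\,\lambda$, $\beta_0=\tfrac{8\sqrt m\, diam(K)}{\epsilon}$, $\lambda_1(D):=\lambda(\beta_0 D)$ and $\lambda'=\nu*\lambda_1$. Exactly as in the proof of Theorem \ref{densethm112}, $\text{supp}\,\lambda_1\subseteq\big[-\tfrac{\epsilon}{4\sqrt m},\tfrac{\epsilon}{4\sqrt m}\big]^m$, whence $d_L(\nu,\lambda')\le\epsilon/2$ and $d_L(\mu,\lambda')<\epsilon$.

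It remains to check that $\lambda'$ belongs to the required set. For the dimension, Lemma \ref{1179} gives $\dim_{L^q}\lambda_1=\dim_{L^q}\lambda=\beta$, and then Lemma \ref{convothm} gives $\dim_{L^q}\lambda'=\dim_{L^q}(\nu*\lambda_1)=\beta$. For the Fourier decay, the rescaling identity $\widehat{\lambda_1}(\xi)=\widehat{\lambda}(\xi/\beta_0)$ shows that $\lambda_1$ is Rajchman, respectively has power decay, precisely when $\lambda$ does (the Rajchman case being the scaling assertion of Proposition \ref{convothm112}, the power-decay case immediate from the identity); and since $\nu$ is a probability measure, $|\widehat{\nu}|\le 1$ is bounded, so the convolution assertion of Proposition \ref{convothm112}, applied to the Rajchman (resp. power-decay) factor $\lambda_1$ and the bounded factor $\nu$, gives that $\lambda'=\lambda_1*\nu$ is Rajchman, respectively has power decay. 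Hence $\lambda'\in\mathcal S_R^\beta$ (resp. $\lambda'\in\mathcal S_F^\beta$), completing the reduction.

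Thus the whole problem comes down to producing, for each $\beta\in[0,m]$, a compactly supported measure of $L^q$-dimension exactly $\beta$ that is Rajchman, respectively has power Fourier decay. For $\beta=m$ one takes $\lambda=\mathcal L\big|_{[0,1]^m}$: it is absolutely continuous, hence Rajchman by the Riemann--Lebesgue lemma; its Fourier transform is $\prod_{\ell=1}^m\widehat{1_{[0,1]}}(\xi_\ell)$, which satisfies $|\widehat{\lambda}(\xi)|\le C|\xi|^{-1}$ (bound the factor belonging to the coordinate of largest modulus, using $|\widehat{1_{[0,1]}}|\le 1$ for the rest and $\max_\ell|\xi_\ell|\ge|\xi|/\sqrt m$), so it has power Fourier decay; and $\dim_{L^q}\lambda=m$, as in Example \ref{eg2}. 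For $\beta\in(0,m)$ one may use a self-similar measure on $\mathbb R^m$, or a product of one-dimensional ones, with algebraic contraction ratio satisfying ESC: its $L^q$ dimension is governed by Shmerkin's formula \cite[Theorem 6.6]{Sh1} and can be tuned to the prescribed value $\beta$ (the value being realised there as a genuine limit, so that for a product the $L^q$ spectrum adds exactly), while power Fourier decay is furnished by Solomyak's results \cite{BS,BS12}, the higher-dimensional decay being obtained, for products, by again bounding the coordinate of largest modulus. I expect the genuinely hard case to be $\beta=0$, where one needs a compactly supported \emph{atomless} measure of $L^q$-dimension zero that is simultaneously Rajchman, respectively has power Fourier decay: vanishing $L^q$-dimension forces the moments $\sum_{I\in D_k}\mu(I)^q$ to fail to decay exponentially along a sequence of scales, which is in tension with Rajchman behaviour and, a fortiori, with power Fourier decay, so reconciling the two — by a careful direct construction, or by invoking known dimension-zero Rajchman measures — is where the main work of the proof lies.
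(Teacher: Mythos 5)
Your reduction is exactly the paper's: approximate $\mu$ by a finitely supported $\nu\in\mathcal{S}(\mathbb{R}^m)$, convolve with the rescaled witness $\lambda_1$, bound $d_L(\nu,\nu*\lambda_1)$ as in Theorem \ref{densethm112}, and transfer the $L^q$ dimension via Lemmas \ref{convothm} and \ref{1179} and the Fourier decay via Proposition \ref{convothm112} (your identity $\widehat{\lambda_1}(\xi)=\hat{\lambda}(\xi/\beta_0)$ is precisely the scaling half of that proposition). Where you go beyond the paper is the part the paper does not do at all: its proof simply says ``choose a non-zero compactly supported Rajchman measure $\lambda\in\mathcal{S}_R$'' and then ``choose $\lambda\in\mathcal{S}_R^\beta$'', never addressing whether compactly supported witnesses in $\mathcal{S}_R^\beta$ or $\mathcal{S}_F^\beta$ exist, and never treating $\mathcal{S}_F^\beta$ explicitly. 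Your witness for $\beta=m$ is correct, and your sketch for $\beta\in(0,m)$ (ESC plus Shmerkin's formula for the dimension, Solomyak's results for power decay, products for higher $m$) is plausible though the tuning to an exact value $\beta$ would still have to be written out carefully.

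The gap you admit at $\beta=0$ is genuine, and your suspected ``tension'' is in fact an obstruction: $\mathcal{S}_F^0$ is empty, so no completion of your plan (or of the paper's) is possible there. Indeed, if $|\hat{\mu}(\xi)|\le C(1+|\xi|)^{-\sigma}$ with $\sigma>0$, then majorizing $1_{B(x,r)}$ by a Schwartz bump and using Fourier inversion gives $\mu(B(x,r))\le C'r^{\sigma'}$ uniformly in $x$ for any $0<\sigma'<\min\{\sigma,m\}$; hence $\sum_{I\in D_k}\mu(I)^q\le\big(\max_I\mu(I)\big)^{q-1}\le C''\,2^{-k\sigma'(q-1)}$, so $\dim_{L^q}(\mu)\ge\sigma'>0$ for every $q>1$. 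Thus the density claim for $\mathcal{S}_F^{0}$ is false as stated, and the theorem needs $\beta$ restricted away from $0$ in the $\mathcal{S}_F^\beta$ part. For $\mathcal{S}_R^0$ the obstruction disappears (Rajchman decay may be arbitrarily slow), but exhibiting a compactly supported Rajchman measure with $\dim_{L^q}=0$ requires a genuine construction of Ivashev--Musatov/K\"orner type, which neither you nor the paper supplies; so at $\beta=0$ your proof is openly incomplete while the paper's is tacitly so, and in the power-decay case the statement itself must be corrected rather than the proof completed.
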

\begin{proof}
 Let $ \mu \in \mathcal{P}(\mathbb{R}^m)$ and $\epsilon>0.$ Using the density of $\mathcal{S}(\mathbb{R}^m)$ in $\mathcal{P}(\mathbb{R}^m)$, there exists $\nu=\sum_{j=1}^nc_j\delta_{x_j}$ in $\mathcal{S}(\mathbb{R}^m)$ such that $$ d_L(\mu, \nu) < \frac{\epsilon}{2}.$$Choose a non-zero compactly supported Rajchman measure $ \eta \in \mathcal{S}_R$ with its support $K \subset \mathbb{R}^m.$ Let $\eta'= \nu * \eta_1,$ where $\eta_1(D):=\eta(\frac{8\sqrt{m}diam(K)D}{\epsilon})$ and $diam(K):=\sup\{\|x-y\|_2: x,y \in K\}.$ Note that Proposition \ref{convothm112} implies that $\eta' \in \mathcal{S}_R,$ that is, $\eta'$ is Rajchman. Following the lines of Theorem \ref{densethm112}, one gets $d_L(\mu,\eta')<\epsilon,$ showing that $\mathcal{S}_R$ is dense in $\mathcal{P}(\mathbb{R}^m). $ Further, to show $\mathcal{S}_R^\beta$ is dense in $\mathcal{P}(\mathbb{R}^m), $ choose $\eta\in \mathcal{S}_R^\beta.$ This concludes the proof.
% We proceed by mentioning following points:
% \begin{itemize}
%     \item Choose a non-zero compactly supported Rajchman measure $ \eta \in \mathcal{S}_R$ with its support $K \subset \mathbb{R}^m.$ Without loss of generality, we assume $K \subseteq [0,c]^m$ for some $c>0.$
%     \item Let $\eta'= \nu * \eta_1,$ where $\eta_1(A):=\eta(\frac{2diam(K)A}{\epsilon})$ and $diam(K):=\sup\{\|x-y\|_2: x,y \in K\}.$ 
%     \item Note that Proposition \ref{convothm112} implies that, $\eta' \in \mathcal{S}_R,$ that is, $\eta'$ is Rajchman.
% \end{itemize}
% Now, the reader may prove by following the techniques of the proof of Theorem \ref{densethm112}, so we avoid the detailed proof.
\end{proof}
% B. Solomyak, Fourier decay for self-similar measures, Proceedings of the American Mathematical Society 149 (8),(2021) 3277-3291.\\
    % ...B. Solomyak, Fourier decay for homogeneous self-affine measures,
% Journal of Fractal Geometry 9(1)(2022) 193-206.
% \\
% A. Rapaport, On the Rajchman property for self-similar measures on $R^d$, Advances in Mathematics 403 (2022): 108375.
\section{Conclusion \& future remarks}\label{Sec4}
Initially, we discussed density results w.r.t. the Assouad dimension and $L^q$ dimensions of sets and measures. Next, we define the modified ESC for the class of IFS consisting of a finite number of affine or conformal transformations on $\mathbb{R}^m$. Thus, our definition is the weaker version of the ESC, defined in \cite{MHOCHMAN1}. In Theorem \ref{th3.4}, we established ``modified ESC$\implies$ESC, in the case of IFS consisting of similarity transformations in $\mathbb{R}^m.$'' But the converse is still open to us. A step towards the converse is shown in Proposition \ref{prop1} in the case when all the similarity ratios are equal in $\mathbb{R}.$ It is noted that this case covers a strong class of IFS such as Cantor-type sets, Bernoulli convolutions.  Olsen, in his work on $L^q$ dimensions of measure \cite{O1}, proved the $G_{\delta}$-dense subset of the space of Borel probability measures with respect to the Monge-Kantorovich metric. Analogous to the decomposition of a function preserving dimension, the decomposition of measures under convolution preserving $L^q$ dimension is not possible in general. For any real number $\alpha$ with $0\le \alpha \le m$ and any measure $\mu \in \mathcal{P}( \mathbb{R}^m)$, in general there do not exist $\nu,\eta\in \mathcal{P}( \mathbb{R}^m)$ such that $\mu=\nu * \eta$ and $\dim_{L^q}(\nu)=\dim_{L^q}(\eta)=\alpha.$ This is because of Lemma 2.2 of Feng et al. \cite{Feng}, which claims $\dim_{L^q}(\mu)\ge \alpha.$ But we can try to prove the following restricted version of decomposition and dimension-based results in the future.
% Motivated by Olsen's work and the well-known decomposition of functions and measures, we will try to prove the following decomposition and dimension-based results in the future.
\\

\textbf{Issue 1.}
Let $\mu \in \mathcal{P}( \mathbb{R}^m)$ and $\alpha\in \mathbb{R}$ with  $0\le \alpha \le \dim_{L^q}(\mu).$ Then, there exist Borel probability measures $\nu,\eta$ such that $\mu=\nu * \eta$ and $\dim_{L^q}(\nu)=\dim_{L^q}(\eta)=\alpha.$\\

%\begin{proof}
% I think this theorem does hold in general:
%Define a set as $\mathcal{S}_\nu=\{\mu*\nu: \mu\in\mathcal{P}(\mathbb{R}^m) \text{ and }\dim_{L^q}\mu=0\}.$ We claim that $\mathcal{S}_\nu$ is $G_\delta$ dense subset of $\mathcal{P}(\mathbb{R}^m).$
%\end{proof}
% The next theorem is motivated by Theorem $1.2$ of \cite{Liu}.
\textbf{Issue 2.}
Let $0\le \alpha \le m$, and $\mu$ be a finite Borel measure such that $\dim_{L^q}(\mu) \le \alpha.$ Then, there exist finite Borel measures $\nu,\eta$ such that $\mu=\nu+\eta$ and $\dim_{L^q}(\nu)=\dim_{L^q}(\eta)=\alpha.$
% \end{theorem}
% \begin{proof}
% Let $D_r(\mu)=\alpha.$ Choose $\nu=\eta= \frac{1}{2}\mu.$ By Lemma \ref{lipdim}, we have $\dim_{L^q}(\nu)=\dim_{L^q}(\eta)=\alpha,$ hence the result whenever $\dim_{L^q}()=\alpha.$ 
% \end{proof}

\section*{Statements and Declarations}
 The authors have no competing interests to declare that are relevant to the content of this article. There are no data associated with this paper.
   
\section*{Acknowledgements}
 The first author is supported by the SEED grant project of IIIT Allahabad. The second author is financially supported by the Ministry of Education at the IIIT Allahabad. Some parts of the paper have been presented in lectures delivered by the first author at the workshop ``Course on Exponential Separation and $L^q$ Dimension" organized by IIIT Allahabad on April 23, 2025.

\bibliographystyle{amsalpha}

\end{document}